 \def\Spnr{Sp(d,\R)}
 \newcommand\mc[1]{{\mathcal{#1}}}
\newcommand{\tfa}{time-frequency analysis}
\newcommand{\stft}{short-time Fourier transform}
\newcommand{\fif}{if and only if}
\newcommand{\tfs}{time-frequency shift}
\newcommand{\modsp}{modulation space}
\newtheorem{theorem}{Theorem}[section]
\newtheorem{lemma}[theorem]{Lemma}
\newtheorem{proposition}[theorem]{Proposition}
\newtheorem{definition}[theorem]{Definition}
\newcommand{\beqa}{\begin{eqnarray*}}
\newcommand{\eeqa}{\end{eqnarray*}}
\newcommand{\field}[1]{\mathbb{#1}}
\newcommand{\bR}{\field{R}}        
\newcommand{\bZ}{\field{Z}}        
\newcommand{\bC}{\field{C}}        
\def\G{\mathcal{G}}
\def\la{\lambda}
\def\eps{\epsilon}
\def\spdr{{\mathfrak {sp}}(d,\bR)}
\def\cS{\mathcal{S}}
\def\cM{\mathcal{M}}
\def\cA{\mathcal{A}}
\def\cC{\mathcal{C}}
\def\wpr{WF^{p,r}_G}
\def\a{\aleph}
\def\rd{\bR^d}
\def\rdd{{\bR^{2d}}}
\def\lrd{L^2(\rd)}
\def\intrd{\int_{\rd}}
\def\intrdd{\int_{\rdd}}
\def\R{\right)}
\def\<{\left<}
\def\>{\right>}
\def\inv{^{-1}}
\def\mv1{M_v^1}
\def\phas{(x,\xi)}
\def\mn{(m,n)}
\def\mn'{(m',n')}
\def\Spnr{Sp(d,\R)}
\def\o{\eta}
\def\a{\alpha}
\def\R{\mathbb{R}}
\def\Ren{\mathbb{R}^d}
\def\Renn{\mathbb{R}^{2d}}
\def\sch{\mathcal{S}}
\def\Fur{\mathcal{F}}
\def\H{{\mathbb H}}
\def\f{\varphi}
\def\Sn2{S_{2}(L^{2}(\Ren))}
\def\S1{S_{1}(L^{2}(\Ren))}
\def\sig00{\sigma_{0,0}}
\def\la{\langle}
\def\ra{\rangle}
\newcommand{\A}{\mathcal{A}}
\begin{document}
\begin{abstract} This work deals with Schr\"odinger equations with
quadratic and sub-quadratic Hamiltonians perturbed by a potential. In particular we shall focus on  bounded, but not necessarily smooth
perturbations, following the footsteps of the preceding works \cite{MetapWiener13,wavefrontsetshubin13}. To the best of our knowledge these are
the pioneering papers which contain the most general results about the time-frequency concentration of the Schr\"odinger evolution. We shall give a representation of such evolution as the composition of a metaplectic operator and a pseudodifferential operator having symbol in certain classes of modulation spaces. About propagation of singularities, we use a new notion of wave front set, which allows the expression of optimal results of propagation in our context. To support this claim, many comparisons with the existing literature are performed in this work.\end{abstract}

\title{On the Schr\"odinger equation with potential in modulation spaces}

\author{Elena Cordero}
\address{Universit\`a di Torino, Dipartimento di Matematica, via Carlo Alberto 10, 10123 Torino, Italy}
\email{elena.cordero@unito.it}
\author{Fabio Nicola}
\address{Dipartimento di Scienze Matematiche,
Politecnico di Torino, corso Duca degli Abruzzi 24, 10129 Torino,
Italy}
\email{fabio.nicola@polito.it}
\subjclass{Primary 35S30; Secondary 47G30}

\subjclass[2010]{35S30,
47G30, 42C15}
\keywords{Fourier Integral
operators, modulation spaces, metaplectic operator, short-time Fourier
 transform,  Wiener algebra, Schr\"odinger equation}
\maketitle

\section{Introduction}
A wave packet is a function on $\rd$ that is well-localized  both in its time and  frequency domain. Willing to decompose a function $f(x)$ into uniform wave packets, we are led to the following time-frequency concepts, at the basis of the Time-Frequency Analysis:   the linear
operators of translation and modulation
\begin{equation*}
 T_yf(x)=f(x-y)\quad{\rm and}\quad M_{\xi}f(x)= e^{2\pi i \xi
  x}f(x), \quad x,y,\xi\in\rd,
\end{equation*}
whose composition is  the time-frequency shift
 $\pi(z)=M_{\xi} T_{y}$, $z=(y,\xi)\in T^*\rd=\rdd$, called the phase-space (or time-frequency space).\par
Given  $g\in \cS(\rd)\setminus \{0\}$, the so-called Gabor atom $\pi(z)g$ can then  be considered a wave packet. \par
As elementary application of this decomposition, consider  the Cauchy problem for the
Schr\"odinger equation which describes the free particle:
\begin{equation}\label{cp}
\begin{cases}
i\displaystyle\frac{\partial u}{\partial t}+\Delta u=0\\
u(0,x)=u_0(x),
\end{cases}
\end{equation}
with $x\in\R^d$, $d\geq1$. The  explicit formula for the solution is the Fourier multiplier
\begin{equation}\label{sol}
u(t,x)=e^{i t \Delta}u_0(x)=(K_t\ast u_0)(x),
\end{equation}
where
\begin{equation}\label{chirp0}
K_t(x)=\frac{1}{(4\pi i t)^{d/2}}e^{i|x|^2/(4t)}.
\end{equation}
Starting with the wave packet $u_0= \pi(z)\f$, where $ \f(x)=e^{-\pi |x|^2}$ is the Gaussian function, the computations developed in \cite[Section 6]{fio3} shows
that the solution $u(t,x)=e^{i t \Delta}(\pi(z)\f)(x)$, $z=(z_1,z_2)\in\rdd$, is given by
\begin{equation}\label{zz1}
u(t,x)=(1+4\pi i t)^{-d/2}e^{-\frac{4\pi^2 t z_2 (2z_1+i z_2 )}{1+ 4 \pi i t}}
M_\frac{z_2}{1+ 4 \pi i t}T_{z_1}e^{-\frac{\pi}{1+ 4 \pi i t}|x|^2}.
\end{equation}
Then straightforward computations show there exist constant $C_t>0$, $\eps_t>0$, depending on $t$, such that
\begin{equation}\label{4bis}
|\la u, \pi(w)\f\ra|\leq C e^{-\eps |w-\A_t z|^2}
\end{equation}
where $\A_t z=(z_1+ 4\pi t z_2, z_2)$.
So the evolution $e^{it \Delta}$ sends wave packets into wave packets and the phase-space concentration of the evolution is well-determined.\par
This simple example gives the intuition that the evolution of Schr\"odinger   equations with Hamiltonians coming from classical mechanics can be still interpreted as trajectory of particles in phase-space. The particles corresponds to wave packets that have a good time-frequency localization.

The natural concept in this order of ideas is the time-frequency representation called the short-time Fourier transform.
Precisely, the short-time Fourier transform (STFT) of a function or
distribution $f$ on $\rd$ with respect to a Schwartz window
function $g\in\cS(\rd)\setminus\{0\}$ is defined by
\begin{equation}\label{STFT}
V_g f(z)=\la f ,\pi(z)g\ra =\intrd f(v) \overline{g(v-z_1)}
e^{- 2\pi i v z_2}\, dv,\quad z=(z_1,z_2) \in\rdd.
\end{equation}
The time-frequency localization of $f$ is obtained by comparison to wave packets $\pi(z)g$:  $f$ is localized near $z_0\in\rdd$ if $V_g f(z)$ is large for $z$ near $z_0$ and decays off $z_0$. The formal theory of time-frequency localization leads to modulation spaces, introduced by Feichtinger  in 1983 \cite{F1}.
Namely, fix $g\in\cS(\rd)$, consider a weight function $m$ on $\rdd$ and $1\leq p\leq \infty$. The modulation space $M^p_m(\rd)$
 is  defined as the space of all tempered distribution $f\in\cS'(\rd)$ for which
 \begin{equation}\label{minfty}
  \intrdd |V_g f(z)|^p m(z)^{p}\,dz<\infty
 \end{equation}
 (with obvious modifications for $p=\infty$, see Section $2$ below).
The measure of the time-frequency localization of $f$ is done by computing a weighted $L^p$ norm of the corresponding STFT $V_g f$. If $m=1$ (unweighted case), we simply write $M^p$ in place of $M^p_m$. A preliminary question is then the following.\par\smallskip
\emph{Does the Schr\"odinger evolution preserve modulation spaces?}\par\smallskip
A first positive answer for the evolution of the free particle in \eqref{cp} is contained in the pioneering work \cite{Benyi} on boundedness on modulation spaces for Fourier multipliers:
$$\|e^{i t \Delta}u_0\|_{M^p}\leq C (1+|t|)^{d/2} \|u_0\|_{M^p},\quad u_0\in\cS(\rd).
$$
The proof follows from \eqref{zz1}, \eqref{4bis}, by taking into account the $t$-dependence.

The study of nonlinear Schr\"{o}dinger equations and other PDEs in modulation spaces has been widely developed by B. Wang and collaborators in many papers, see e.g., \cite{RSW,baoxiang,wh} and the recent textbook \cite{Wangbook}.

Here our attention is addressed to more general linear Schr\"odinger   equations. Namely we  show how time-frequency analysis can be successfully applied in the study of the Cauchy problem  for linear Schr\"odinger   equations of the type
  \begin{equation}\label{C1}
\begin{cases} i \displaystyle\frac{\partial
u}{\partial t} +H u=0\\
u(0,x)=u_0(x),
\end{cases}
\end{equation}
with $t\in\bR$ and the initial condition $u_0\in\cS(\rd)$ or some
larger space.  We consider  an operator $H$ of the form
\begin{equation}\label{C1bis}
H=a^w+ \sigma^w,
\end{equation}
 where $a^w$ is the Weyl quantization of a real quadratic homogeneous   polynomial  on
$\rdd$ and  $\sigma^w$ is a pseudodifferential operator (in the Weyl form) with a rough symbol $\sigma$, belonging  to a suitable modulation space.
We recall that  the Weyl quantization of a symbol $a(x,\xi)$ is correspondingly defined as
\[
a^w f(x)=a^w(x,D) f=\iint_{\rdd} e^{2\pi i(x-y)\xi} a\Big(\frac{x+y}{2},\xi\Big) f(y) dy\, d\xi.
\]
The aim is to find conditions on the perturbation $\sigma^w$ for which the evolution $e^{it H}$ preserves modulation spaces.
Estimates on modulation spaces for Schr\"{o}dinger evolution operators $e^{itH}$ are performed in the works \cite{kki1,kki2,kki3,kki4}. In particular, the most general result is given in \cite{kki4}, where the authors  study the operator $H=\Delta-V(t,x)$, the time-dependent potential $V(t,x)$ being smooth and quadratic or sub-quadratic. They obtain boundedness results for the propagator $e^{it H}$ in the unweighted modulation spaces $M^{p,q}$, $1\leq p,q\leq\infty$ (see definition in Section $2$).  In these papers the key idea is to consider, as window function to estimate the modulation space norm of the solution $e^{itH}u_0$, the Schwartz function $\varphi(t,\cdot)= e^{i t \Delta}\varphi$, for a given $\varphi=\varphi(0,\cdot)\in\cS(\rd)\setminus\{0\}$, used to estimate the modulation norm of the initial datum $u_0$.\par
Related results in the $L^p$-theory are contained in~\cite{js94,js95}.
Finally, more general Hamiltonians and potentials are studied in \cite{MetapWiener13,nuovareference}. In particular the perturbation operators have symbols in the so-called Sj\"ostrand class $S_w=M^{\infty,1}(\rdd).$\par
In what follows we shall present to the reader a proof, new in literature, of the boundedness of $e^{it H}$ on modulation spaces when the perturbation $\sigma^w$ belongs to the following
 scale of  modulation spaces
\begin{equation}\label{cinque}
S^s_w=M^{\infty}_{1\otimes v_s}(\rdd),\quad v_s(z)=\langle
z\rangle ^s=(1+|z|^2)^{s/2}, \,\,z\in\rdd,
\end{equation}
with  the  parameter $s>2d$ (See Theorem \ref{teofinal} in the sequel). Observe that
\begin{equation}\label{inteSs}
\bigcap_{s\geq 0}S^s_w=S^0_{0,0},
\end{equation}
where $S^0_{0,0}$ is the   H\"ormander class of all $\sigma\in\cC^\infty(\rdd)$ satisfying
\begin{equation}\label{HCS0}
|\partial^\a \sigma(z)|\leq C_\a, \quad \a\in\bZ^{2d}_+,
\,\,z=(x,\xi)\in\rdd.
\end{equation}
For $s\to 2d+$, the symbols in $S^s_w$ have a smaller
regularity.  In particular, for
$s>2d$, $S^s_w\subset \cC^0(\rdd)$,  but the differentiability is lost
in general as soon as $s\leq 2d +1$. \par
Finally, note that $ \bigcup_{s>2d} S^s_w\subset S_w\subset \cC^0(\rdd)$, with strict inclusions.\par
\medskip
The representation of the evolution operator $e^{ita^w}$ which is related to the unperturbed Schr\"odinger equation $\sigma^w=0$ is already well understood since  $e^{ita^w}$ is then a metaplectic operator. To benefit of non-expert readers, we shall study in  detail this case in the next Chapter $5$. We recall that
metaplectic operators quantize linear symplectic transformations of the phase-space and  arise as intertwining  operators of the Schr\"odinger
representation of the Heisenberg group.   \par
If we consider the perturbed problem  ($\sigma^w\not=0$), a natural question is as follows:\par
\smallskip
\emph{What is the representation  of  $e^{it H}$ in presence of perturbations?}\par
\smallskip
An answer in the case $\sigma^w=\sigma_t(x)$, that is, the potential is a multiplication by a smooth function $\sigma_t\in\cC^\infty(\rd)$, satisfying additional decay properties together with its derivatives, is contained in  Weinstein~\cite{Wein85}. There the evolution $e^{itH}$ was written as a composition of a metaplectic operator with a pseudodifferential operator having symbol in particular subclasses of
H{\"o}rmander classes. We generalize widely the latter result by considering rough potentials with symbols in $ S^s_w$ and showing that $e^{itH}$ is the product of a metaplectic operator and of a pseudodifferential operator (with symbol in $S^s_w$) for every $t\in\bR$, that is, as defined below,  a generalized metaplectic operator.
The class of the generalized metaplectic operators enters the class of the Fourier integral operators introduced and studied in \cite{Wiener}.  \par
First, we recall
the classical metaplectic operators.
Let $\A $ be a symplectic matrix on $\rdd $ (we write $\A\in Sp(d,\bR))$, i.e., $\A$ is a $2d\times 2d$ invertible matrix such that $\A ^T J \A  = J$,
where
\begin{equation}
\label{matriceJ}
J=\begin{pmatrix} 0&I_d\\-I_d&0\end{pmatrix}
\end{equation}
is related to the standard symplectic form on $\rdd $
\begin{equation}
\omega(x,y)=\;^t\!xJy, \qquad x,y\in\R^{2d}.
\label{symp}\end{equation}
Then, the
metaplectic operator $\mu (\cA )$ may be defined by the intertwining
relation
\begin{equation}\label{metap}
\pi (\cA z) = c_\cA  \, \mu (\cA ) \pi (z) \mu (\cA )\inv  \quad  \forall
z\in \rdd \, ,
\end{equation}
with a phase factor $c_\cA \in \bC , |c_{\cA } | =1$  (for details, see e.g. \cite{folland89,Gos11}).
 The kernel or so-called  Gabor matrix of the
metaplectic operator with respect to the set of the \tfs s $\pi (z)$ satisfies the following estimate. If $\A \in \mathrm{Sp}(d,\R)$ and $g\in \cS (\rd
)$, then for every $N\geq 0$ there exists a $C_N >0$ such that
\begin{equation}
  \label{eq:kh21}
|\langle \mu (\A ) \pi (z)g, \pi (w) g\rangle |\leq C_N\langle w-\A z
\rangle ^{-N},\qquad w,z\in\R^{2d}.
\end{equation}
The Gabor matrix is the point of departure in the definition of the generalized metaplectic operators.
\begin{definition}\label{def1.1} Given $\cA \in \Spnr $,
  $g\in\cS(\rd)$,  and $s\geq0$, we say that  a
 linear operator $T:\cS(\rd)\to\cS'(\rd)$ is  a
 generalized metaplectic operator, in short  $T\in FIO(\cA,s)$, if the  Gabor matrix of $T$ satisfies the decay
condition
 \begin{equation}\label{asterisco}
|\langle T \pi(z) g,\pi(w)g\rangle|\leq {C}\langle w-\cA z\rangle^{-s},\qquad  w,z\in \rdd.
\end{equation}
\end{definition}
The union
\[
FIO(Sp(d,\R),s)=\bigcup_{\mathcal{A}\in Sp(d,\R)} FIO(\mathcal{A},s)
\]
is then called the  class of \emph{generalized metaplectic operators}. A generalized metaplectic operator is then proved to be the composition of a metaplectic operator with a pseudodifferential operator having symbol in the class $S^s_w$, cf. Theorem \ref{pseudomu} below.
\vskip0.3truecm
The main result, that is Theorem \ref{teofinal} in the sequel, shows that the propagator of the   perturbed problem
\eqref{C1} -- \eqref{C1bis}  is  a generalized metaplectic operator.

\smallskip
The last problem which arises in the study of the evolution $e^{itH}$ is related to propagation of singularities.\par
\smallskip
\emph{How the evolution $e^{itH}$ propagates the singularities of the initial datum $u_0$?}\par
\smallskip

To answer this question, we employ  a new definition of wave front set, called Gabor wave front set, which is a generalization of the global wave front set introduced by H\"{o}rmander in 1991 \cite{hormanderglobalwfs91} and redefined by using time-frequency analysis in \cite{RWwavefrontset}. See also \cite{wavefrontsetshubin13}. The definition of the classical, the global and the Gabor wave front set, together with a comparison with related results in the literature, is detailed in the last section.\par
\vskip0.1truecm
The contents of the next sections are the following. In Section $2$ we recall the main time-frequency analysis tools and properties of pseudodifferential operators useful for our results. In Section $3$ we survey the main properties of generalized metaplectic operators. Section $4$ and $5$ contain the study of the unperturbed  and perturbed Schr\"odinger equation, respectively. In particular, in Section $5$ it is stated and proved the main result of this paper (Theorem \ref{teofinal}).
Finally, to give a whole treatment of this topic, in  Section $6$ we recall  the definition of the Gabor wave front set and review results of propagation of singularities of the evolution $e^{itH}$, showing some important examples.

\vskip0.3truecm
\section{Preliminaries and  \tfa \,tools}
 We refer the reader to  \cite{book} for an introduction to time-frequency concepts. We write $xy=x\cdot y$ for  the scalar product on
 $\Ren$ and $|t|^2=t\cdot t$ for $t,x,y \in\Ren$.
 The Schwartz class is denoted by
 $\sch(\Ren)$, the space of tempered
 distributions by  $\sch'(\Ren)$. The brackets  $\la f,g\ra$
 denote the extension to $\sch '
 (\Ren)\times\sch (\Ren)$ of the inner
 product $\la f,g\ra=\int f(t){\overline
 {g(t)}}dt$ on $L^2(\Ren)$. The Fourier
 transform is given by ${\hat
   {f}}(\o)=\Fur f(\o)=\int
 f(t)e^{-2\pi i t\o}dt$.
  We  write
 $A \asymp B$  for the equivalence  $c^{-1}B\leq
 A\leq c B$.
\subsection{Modulation  spaces}
Consider a distribution $f\in\cS '(\rd)$
and a Schwartz function $g\in\cS(\rd)\setminus\{0\}$ (the so-called
{\it window}).
The short-time Fourier transform (STFT) of $f$ with respect to $g$ is defined by \eqref{STFT}.
 The  \stft\ is well-defined whenever  the bracket $\langle \cdot , \cdot \rangle$ makes sense for
dual pairs of function or (ultra-)distribution spaces, in particular for $f\in
\cS ' (\rd )$ and $g\in \cS (\rd )$,  or for $f,g\in\lrd$.

Weighted modulation spaces measure the decay of the STFT on the time-frequency (or phase space) plane and were defined by Feichtinger in the 80's \cite{F1}.\par

Let us first introduce the weight functions.  A weight function $v$ on $\rdd$ is submultiplicative if $ v(z_1+z_2)\leq v(z_1)v(z_2)$, for all $z_1,z_2\in\Renn.$  We shall work with the weight functions
\begin{equation} v_s(z)=\la z\ra^s=(1+|z|^2)^{\frac s 2},\quad s\in\R,
\end{equation}
which are submultiplicative for $s\geq0$.\par
If $\cA\in \mathrm{GL}(d,\bR )$, the class of real $d\times d$ invertible matrices, then  $|\cA z|$ defines an equivalent
norm on $\rdd$, hence for every $s\in\R$, there exist $C_1,C_2>0$ such
that
\begin{equation}\label{pesieq}
C_1 v_s(z)\leq v_s(\cA z)\leq C_2 v_s(z),\quad \forall z\in\rdd.
\end{equation}
 For $s\geq0$, we denote by $\mathcal{M}_{v_s}(\rdd)$ the space of $v_s$-moderate weights on $\rdd$;
these  are measurable positive functions $m$ satisfying $m(z+\zeta)\leq C
v_s(z)m(\zeta)$ for every $z,\zeta\in\rdd$.

\begin{definition}  \label{prva}
Given  $g\in\cS(\rd)$, $s\geq0$, a  weight
function $m\in\mathcal{M}_{v_s}(\rdd)$, and $1\leq p,q\leq
\infty$, the {\it
  modulation space} $M^{p,q}_m(\Ren)$ consists of all tempered
distributions $f\in \cS' (\rd) $ such that $V_gf\in L^{p,q}_m(\Renn )$
(weighted mixed-norm spaces). The norm on $M^{p,q}_m(\rd)$ is
\begin{equation}\label{defmod}
\|f\|_{M^{p,q}_m}=\|V_gf\|_{L^{p,q}_m}=\left(\int_{\Ren}
  \left(\int_{\Ren}|V_gf(x,\o)|^pm(x,\o)^p\,
    dx\right)^{q/p}d\o\right)^{1/q}  \,
\end{equation}
(with obvious modifications for $p=\infty$ or $q=\infty$).
\end{definition}
 When $p=q$, we simply write $M^{p}_m(\rd)$ instead of
 $M^{p,p}_m(\rd)$. The spaces $M^{p,q}_m(\rd)$ are Banach spaces,  and
 every nonzero $g\in M^{1}_{v_s}(\rd)$ yields an equivalent norm in
 \eqref{defmod}. Thus  $M^{p,q}_m(\Ren)$ is independent of the choice
 of $g\in  M^{1}_{v_s}(\rd)$.
\par In the sequel we shall use the following inversion formula for
the STFT (see  (\cite[Proposition 11.3.2]{book}): assume $g\in M^{1}_v(\rd)\setminus\{0\}$,
 $f\in M^{p,q}_m(\rd)$, then
\begin{equation}\label{invformula}
f=\frac1{\|g\|_2^2}\int_{\R^{2d}} V_g f(z) \pi (z)  g\, dz \, ,
\end{equation}
and the  equality holds in $M^{p,q}_m(\rd)$.\par
 The adjoint operator of $V_g$,  defined by
 $$V_g^\ast F(t)=\intrdd F(z)  \pi (z) g dz \, ,
 $$
 maps the Banach space $L^{p,q}_m(\rdd)$ into $M^{p,q}_m(\rd)$. In particular, if $F=V_g f$ the inversion formula \eqref{invformula} becomes
 \begin{equation}\label{treduetre}
 {\rm Id}_{M^{p,q}_m}=\frac 1 {\|g\|_2^2} V_g^\ast V_g.
 \end{equation}

\subsection{Metaplectic and Pseudodifferential Operators}

Consider $g\in\cS(\rd)\setminus\{0\}$ with $\|g\|_2=1$. Then, using the inversion formula \eqref{invformula}, any
linear continuous operator $T:\cS(\rd)\to \cS'(\rd)$ admits the
following time-frequency representation:
\begin{equation}\label{KGbM}
V_g(Tf)(w)=\intrdd k(w,z) V_g f(z) \,dz.
\end{equation}
where we call the kernel
\begin{equation}\label{GbM}
k(w,z):=\la T\pi(z)g,\pi(w) g\ra,\quad w,z\in\rdd
\end{equation}
the continuous Gabor matrix of the operator $T$. The name is related to the discretization of $T$ by means of  Gabor frames. More precisely, the collection of
time-frequency shifts $\G(g,\Lambda)=\{\pi(\lambda)g:\
\lambda\in\Lambda\}$ for a  non-zero $g\in \cS(\rd)$ (or more generally  $g\in L^2(\rd)$) is called a Gabor frame, if there exist
constants $A,B>0$ such that
\begin{equation}\label{gaborframe}
A\|f\|_2^2\leq\sum_{\lambda\in\Lambda}|\langle f,\pi(\lambda)g\rangle|^2\leq B\|f\|^2_2\qquad \forall f\in L^2(\rd).
\end{equation}
This implies the expansion with unconditional convergence in $L^2(\rd)$:
\begin{equation}\label{parsevalframe}
f=\sum_{\lambda\in\Lambda}\langle f,\pi(\lambda)\gamma\rangle\pi(\lambda)g\qquad\forall f\in L^2(\rd)
\end{equation}
where $\gamma$ is a dual window of $g$.
Using \eqref{parsevalframe} the following Gabor decomposition of the operator $T$ is obtained:
$$Tf(x)=\sum_{\mu\in\Lambda}\sum_{\lambda\in\Lambda}\underbrace{\la T \pi(\lambda)g,\pi(\mu)g\ra}_{T_{\mu \,\lambda}} c_\lambda  \pi(\mu)\gamma,
$$
where $c_\lambda =\la f,\pi(\lambda)\gamma\ra$ are the Gabor coefficients of $f$ with respect to the dual Gabor frame $\G(\gamma,\Lambda)$ and the infinite matrix $\{T_{\mu \,\lambda}\}_{\mu, \,\lambda\in\Lambda}$ is called the \emph {Gabor matrix} of $T$. For  simplicity, from now on we shall present the theory only from a continuous point of view, but the discrete counterpart by Gabor frames works as well, and this is indeed the starting point for further  numerical implementations. \par
\bigskip
{\bf Metaplectic Operators.} Given a symplectic matrix $\cA \in Sp(d,\R)$, the corresponding
metaplectic operator $\mu (\cA )$ can be defined by the intertwining relation \eqref{metap} (see also Section $4$).\par
For matrices $\A\in Sp(d,\bR)$ in special form, the corresponding metaplectic operators can be computed explicitly. Precisely, For
$f\in L^2(\R^d)$, we have
\begin{align}
\mu\left(\begin{pmatrix} A&0\\ 0&\;^t\!A^{-1}\end{pmatrix}\right)f(x)
&=(\det A)^{-1/2}f(A^{-1}x)\label{diag}\\
\mu\left(\begin{pmatrix} I&0\\ C&I\end{pmatrix}\right)f(x)
&=\pm e^{-i\pi Cx \cdot x}f(x)\label{lower}\\
\mu\left(J\right)&=i^{d/2}\mc{F}^{-1}\label{iot},
\end{align}
where $\mc{F}$ denotes the Fourier transform.

The Gabor matrix  of a metaplectic operator $ \mu (\cA ) $ is concentrated along the graph of the symplectic phase-space transformation $\cA$ and decays super polynomially outside, as expressed in the first part of the following version of \cite[Lemma 2.2]{MetapWiener13}. The second part gives a technical information used later (the proof is analogous to the one of \cite[Lemma 2.2]{MetapWiener13}).

\begin{lemma} \label{lkh1}
  (i) Fix $g\in \cS (\rd )$ and $\A \in Sp(d,\R)$, then, for all $N\geq
  0$,
  \begin{equation}
    \label{eq:kh1}
    |\langle \mu (\A )\pi (z)g, \pi (w)g\rangle | \leq C_N \langle
    w-\A z\rangle ^{-N}  \, .
  \end{equation}
(ii) If $\sigma \in M^\infty _{1\otimes v_s}$ and $\A \in Sp(d,\R)$,
then $\sigma \circ \A \in M^\infty _{1\otimes v_s}$ and
\begin{equation}
  \label{eq:kh2}
\|\sigma \circ \A \inv \|_{M^\infty _{1\otimes v_s}} \leq   \|(\A ^T)\inv
\|^s \, \|V_{\Phi
  \circ \A  } \Phi \|_{L^1_{v_s} } \|\sigma   \|_{M^\infty _{1\otimes v_s}},
\end{equation}
where $\Phi\in\cS(\rdd)$ is the window used to compute the norms of $\sigma$ and $\sigma \circ \A \inv $.
\end{lemma}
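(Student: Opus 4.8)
\emph{Part (i).} The plan is to use the intertwining relation \eqref{metap} to reduce the Gabor matrix entry of $\mu(\cA)$ to an ordinary STFT of two Schwartz functions. Since $|c_\cA|=1$, relation \eqref{metap} can be rewritten as $\mu(\cA)\pi(z)=\overline{c_\cA}\,\pi(\cA z)\mu(\cA)$, so that
\[
\langle \mu(\cA)\pi(z)g,\pi(w)g\rangle=\overline{c_\cA}\,\langle\pi(\cA z)\mu(\cA)g,\pi(w)g\rangle .
\]
Using the covariance of the STFT, i.e. $|\langle\pi(u)h,\pi(v)g\rangle|=|V_gh(v-u)|$, this gives $|\langle\mu(\cA)\pi(z)g,\pi(w)g\rangle|=|V_g(\mu(\cA)g)(w-\cA z)|$. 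Because metaplectic operators are automorphisms of $\cS(\rd)$, we have $\mu(\cA)g\in\cS(\rd)$, hence $V_g(\mu(\cA)g)\in\cS(\rdd)$; in particular $|V_g(\mu(\cA)g)(\zeta)|\le C_N\langle\zeta\rangle^{-N}$ for every $N\ge0$, and evaluating at $\zeta=w-\cA z$ yields \eqref{eq:kh1}.

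\emph{Part (ii).} (The statement and the estimate for $\sigma\circ\cA\inv$ are equivalent to the analogues for $\sigma\circ\cA$, by replacing $\cA$ with $\cA\inv\in Sp(d,\bR)$.) Here I would first move $\cA$ out of the argument of $\sigma$ by a linear change of variables \emph{inside} the integral defining the STFT. Writing out $V_\Phi(\sigma\circ\cA\inv)(x,\xi)=\int_{\rdd}\sigma(\cA\inv t)\,\overline{\Phi(t-x)}\,e^{-2\pi i t\cdot\xi}\,dt$ and substituting $s=\cA\inv t$ (with $|\det\cA|=1$ since $\cA\in Sp(d,\bR)$) one gets
\[
V_\Phi(\sigma\circ\cA\inv)(x,\xi)=V_{\Phi\circ\cA}\,\sigma\big(\cA\inv x,\ \cA^T\xi\big).
\]
Thus $\|\sigma\circ\cA\inv\|_{M^\infty_{1\otimes v_s}}=\sup_{x,\xi}|V_{\Phi\circ\cA}\sigma(\cA\inv x,\cA^T\xi)|\,v_s(\xi)$, and after the substitution $\eta=\cA^T\xi$, i.e. $\xi=(\cA^T)\inv\eta$, this becomes $\sup_{u,\eta}|V_{\Phi\circ\cA}\sigma(u,\eta)|\,v_s((\cA^T)\inv\eta)$.

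It remains to (a) dispose of the weight distortion and (b) replace the window $\Phi\circ\cA$ by the reference window $\Phi$. For (a): since $Sp(d,\bR)$ is stable under inverse and transpose, $(\cA^T)\inv\in Sp(d,\bR)$, so $\|(\cA^T)\inv\|\ge1$ (the singular values of a symplectic matrix multiply to $1$), whence $\langle(\cA^T)\inv\eta\rangle\le\|(\cA^T)\inv\|\,\langle\eta\rangle$ and $v_s((\cA^T)\inv\eta)\le\|(\cA^T)\inv\|^s\,v_s(\eta)$. For (b): the standard pointwise estimate $|V_{\Phi\circ\cA}\sigma|\le\|\Phi\|_2^{-2}\,|V_\Phi\sigma|\ast|V_{\Phi\circ\cA}\Phi|$, together with the submultiplicativity $v_s(\eta)\le v_s(\eta-q)v_s(q)$ and the bound $|V_\Phi\sigma(u-p,\eta-q)|\,v_s(\eta-q)\le\|\sigma\|_{M^\infty_{1\otimes v_s}}$, gives
\[
|V_{\Phi\circ\cA}\sigma(u,\eta)|\,v_s(\eta)\le\frac{\|\sigma\|_{M^\infty_{1\otimes v_s}}}{\|\Phi\|_2^{2}}\int_{\Refn}v_s(q)\,|V_{\Phi\circ\cA}\Phi(p,q)|\,dp\,dq=\frac{\|V_{\Phi\circ\cA}\Phi\|_{L^1_{v_s}}}{\|\Phi\|_2^{2}}\,\|\sigma\|_{M^\infty_{1\otimes v_s}} .
\]
Combining (a) and (b) (and normalizing $\|\Phi\|_2=1$, or simply absorbing $\|\Phi\|_2^{2}$) yields \eqref{eq:kh2}; the right-hand side is finite because $V_{\Phi\circ\cA}\Phi\in\cS(\Refn)\subset L^1_{v_s}$, so in particular $\sigma\circ\cA\inv\in M^\infty_{1\otimes v_s}$.

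\emph{Where the care lies.} Part (i) is immediate once one knows $\mu(\cA)(\cS)=\cS$. In part (ii) there is no conceptual difficulty, but one must keep the bookkeeping straight: the change of variables forces the auxiliary window $\Phi\circ\cA$, and the return to the fixed window $\Phi$ must be performed through the window-change inequality in its $v_s$-weighted form, so that the constant is expressed through $\|V_{\Phi\circ\cA}\Phi\|_{L^1_{v_s}}$; the dependence on $\cA$ is then isolated in the single elementary inequality $v_s((\cA^T)\inv\eta)\le\|(\cA^T)\inv\|^s v_s(\eta)$, which relies on $(\cA^T)\inv$ being symplectic (hence of operator norm $\ge1$) and on \eqref{pesieq}. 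This is exactly the scheme of \cite[Lemma 2.2]{MetapWiener13}.
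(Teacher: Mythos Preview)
Your argument is correct and follows exactly the approach the paper points to: the paper does not give its own proof but refers to \cite[Lemma~2.2]{MetapWiener13}, and your scheme (intertwining relation plus $\mu(\cA)\cS=\cS$ for part~(i); linear change of variables in the STFT followed by the $v_s$-weighted window-change inequality and the elementary bound $v_s((\cA^T)^{-1}\eta)\le\|(\cA^T)^{-1}\|^s v_s(\eta)$ for part~(ii)) is precisely that argument. The only cosmetic point is that your integral $\int v_s(q)\,|V_{\Phi\circ\cA}\Phi(p,q)|\,dp\,dq$ is the $L^1_{1\otimes v_s}$ norm rather than the full $L^1_{v_s}$ norm on $\bR^{4d}$; since $v_s(q)\le v_s(p,q)$ this is in any case dominated by $\|V_{\Phi\circ\cA}\Phi\|_{L^1_{v_s}}$, so \eqref{eq:kh2} holds as stated.
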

\medskip
In particular, using \eqref{KGbM}
 for $T=\mu(\A)$ and the estimate \eqref{eq:kh1}, we can write for every $N\geq 0$
 \begin{equation*}
 |V_g(\mu(\A)f)(w)|\leq C_N \intrdd \la w-\A z\ra^{-N} |V_g f(z)| \,dz.
 \end{equation*}
Consider now a weight $m\in\mathcal{M}_{v_s}$, $s\geq 0$. Then the $v_s$-moderateness yields
\begin{equation*}
 |V_g(\mu(\A)f)(w)| m(w)\leq C_N \intrdd  \la w-\A z\ra^{s-N} m(\A z)|V_g f(z)| \,dz
 \end{equation*}
 and choosing $N$ such that $s-N<-2d$ by Young's inequality we obtain the following boundedness result:
 \begin{theorem}
 Consider $m\in\mathcal{M}_{v_s}$, $s\geq 0$ and $\A \in Sp(d,\R)$. Then the metaplectic operator $\mu(\A)$ is bounded from $M^{p}_{m\circ \cA}(\rd)$ into  $M^{p}_{m}(\rd)$, $1\leq p\leq \infty$, with the norm estimate
 \begin{equation}\label{contmua}
 \|\mu(\A)f\|_{M^{p}_m}\leq C_s \|f\|_{M^{p}_{m\circ \cA}}.
 \end{equation}
 \end{theorem}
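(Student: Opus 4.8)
The plan is to pass, via the Gabor-matrix representation \eqref{KGbM}, from the operator $\mu(\A)$ to a weighted convolution estimate for the short-time Fourier transform, and then to invoke Young's inequality. First I would fix a normalized window $g\in\cS(\rd)$ with $\|g\|_2=1$; by the window independence of the $M^p_m$-norm this choice is harmless. Taking $T=\mu(\A)$ in \eqref{KGbM}--\eqref{GbM} and using the off-diagonal decay \eqref{eq:kh1} of Lemma \ref{lkh1}(i), one obtains, for every $N\geq 0$,
\[
|V_g(\mu(\A)f)(w)|\leq C_N\intrdd \langle w-\A z\rangle^{-N}\,|V_gf(z)|\,dz,\qquad w\in\rdd .
\]
Multiplying by $m(w)$ and using that $m$ is $v_s$-moderate, so that $m(w)\leq C\,v_s(w-\A z)\,m(\A z)=C\,\langle w-\A z\rangle^{s}\,m(\A z)$, we get
\[
m(w)\,|V_g(\mu(\A)f)(w)|\leq C_N\intrdd \langle w-\A z\rangle^{s-N}\,m(\A z)\,|V_gf(z)|\,dz .
\]

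The key point is that $\A\in Sp(d,\bR)$ has $|\det\A|=1$, so the substitution $u=\A z$ is volume preserving and recasts the right-hand side as a genuine convolution $\big(\langle\cdot\rangle^{s-N}\ast h\big)(w)$ with $h(u):=m(u)\,|V_gf(\A\inv u)|$. Applying Young's inequality $\|\varphi\ast h\|_{L^p}\leq\|\varphi\|_{L^1}\|h\|_{L^p}$ for $1\leq p\leq\infty$, and choosing $N$ with $s-N<-2d$, which guarantees $\langle\cdot\rangle^{s-N}\in L^1(\rdd)$, yields
\[
\|m\cdot V_g(\mu(\A)f)\|_{L^p(\rdd)}\leq C_N\,\|\langle\cdot\rangle^{s-N}\|_{L^1(\rdd)}\;\|h\|_{L^p(\rdd)} .
\]
Finally, undoing the substitution in the last factor gives $\|h\|_{L^p}^p=\intrdd m(\A z)^p\,|V_gf(z)|^p\,dz=\|f\|_{M^p_{m\circ\A}}^p$ (with the obvious modification when $p=\infty$), which is exactly the asserted estimate \eqref{contmua}; the constant $C_s$ collects $C_N$, the moderateness constant of $m$, and $\|\langle\cdot\rangle^{s-N}\|_{L^1(\rdd)}$, and depends on $s$ only through the admissible range of $N$.

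No step is genuinely difficult: the argument is essentially bookkeeping around the kernel estimate \eqref{eq:kh1}. Two points deserve a little attention. First, the use of $|\det\A|=1$ is precisely what turns the weighted kernel bound into a true convolution; a general linear change of variables would contribute only an inessential Jacobian, but the symplectic normalization makes even that unnecessary. Second, the calibration of $N$: one must take $N>s+2d$, so that the loss $\langle w-\A z\rangle^{s}$ produced by moderateness is more than compensated by the decay while $\langle\cdot\rangle^{s-N}$ stays integrable on $\rdd$. Only the $v_s$-moderateness of $m$ is used here; no smoothness of $m$ is needed, and no property of the window beyond membership in $\cS(\rd)$.
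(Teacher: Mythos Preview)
Your proof is correct and follows essentially the same route as the paper: use the Gabor-matrix representation \eqref{KGbM} together with the decay estimate \eqref{eq:kh1}, absorb the weight via $v_s$-moderateness, choose $N$ with $s-N<-2d$, and conclude by Young's inequality. The only addition is that you make the volume-preserving substitution $u=\A z$ explicit, which the paper leaves implicit in its appeal to Young's inequality.
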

 The continuity property of a metaplectic operator $\mu(\A)$ on $M^{p,q}(\rd)$, with $p\not=q$, fails in general. Indeed,
 an example is provided by the multiplication operator defined in \eqref{lower}, which is bounded on $M^{p,q}(\rd)$ if and only if $p=q$, as proved in \cite[Proposition 7.1]{fio1}.

\bigskip

{\bf  Pseudodifferential Operators.}
These operators can be described by the off-diagonal decay of their corresponding  Gabor matrices, if their symbols are chosen in suitable modulations spaces. This is the main insight of the papers \cite{charly06,GR}. Namely, we have:
\begin{proposition}[\cite{charly06}]
  \label{charpsdo}
   Fix $g \in\cS(\rd)$, consider $\sigma \in \cS'(\rdd )$ and $s\in\bR$.\\
(i) The symbol  $\sigma$ is in $M^{\infty,1} _{1\otimes v_s}(\rdd )$ \fif\ there exists a function $H\in  L^1_{v_s}(\rdd )$ such that
\begin{equation}
  \label{eq:kh9}
  |\langle \sigma ^w \pi (z) g , \pi (w) g\rangle | \leq H(w-z) \qquad
  \forall w,z \in \rdd \, .
\end{equation}
The function $H$ can be chosen as
\begin{equation}
  \label{eq:kh12}
H(z)=\sup_{u\in\rdd}|V_{\Phi} \sigma (u,j(z))|,
\end{equation}
where $j(z)=(z_2,-z_1)$ for $z=(z_1,z_2)\in\rdd$ and the  window
function $\Phi=W(g,g)$ is  the Wigner distribution of $g$.\\
(ii)  We have $\sigma \in
 M^{\infty}_{1\otimes v_s}(\rdd )$ \fif\
 \begin{equation}
   \label{eq:kh9s}
   |\langle \sigma^w  \pi (z) g , \pi (w) g\rangle | \leq C \la w-z\ra^{-s} \qquad
   \forall w,z \in \rdd,
 \end{equation}
 where  the constant $C$ in \eqref{eq:kh9s} satisfies
   \begin{equation}\label{cost}
   C\asymp \|\sigma\|_{M^{\infty}_{1\otimes v_s}}.
   \end{equation}
\end{proposition}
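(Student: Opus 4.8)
The plan is to reduce both parts to a single pointwise identity linking the Gabor matrix of $\sigma^w$ to the short-time Fourier transform of its symbol, and then to read off (i) and (ii) directly from the definitions of the modulation spaces $M^{\infty,1}_{1\otimes v_s}(\rdd)$ and $M^\infty_{1\otimes v_s}(\rdd)$. First I would record, from the definition of the Weyl quantization together with the change of variables $x=u+t/2$, $y=u-t/2$, the classical formula $\langle\sigma^wf,h\rangle=\langle\sigma,W(h,f)\rangle$, where $W(h,f)$ denotes the cross-Wigner distribution of $h$ and $f$ and $\langle\cdot,\cdot\rangle$ is the duality between $\cS'(\rdd)$ and $\cS(\rdd)$. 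Specialising to $f=\pi(z)g$ and $h=\pi(w)g$, and using the covariance of the cross-Wigner distribution under time-frequency shifts — namely $W(\pi(w)g,\pi(z)g)=c\,M_{j(w-z)}T_{(w+z)/2}\Phi$ with $|c|=1$ and $\Phi:=W(g,g)$ — this produces the key identity
\begin{equation}\label{pp:key}
|\langle\sigma^w\pi(z)g,\pi(w)g\rangle|=|V_\Phi\sigma(\tfrac{w+z}{2},j(w-z))|,\qquad w,z\in\rdd .
\end{equation}
Since $g\in\cS(\rd)$, the window $\Phi=W(g,g)$ belongs to $\cS(\rdd)$ and is therefore admissible for the modulation spaces over $\rdd$, so that $V_\Phi\sigma$ computes their norms up to equivalence.

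Granted \eqref{pp:key}, I would argue part (i) as follows. For the implication $\Rightarrow$, set $H(\zeta):=\sup_{u\in\rdd}|V_\Phi\sigma(u,j(\zeta))|$, which is exactly the function appearing in the statement: then \eqref{pp:key} gives at once $|\langle\sigma^w\pi(z)g,\pi(w)g\rangle|\le H(w-z)$, while the change of variables $\eta=j(\zeta)$ — orthogonal, so that $v_s\circ j=v_s$ and $|\det j|=1$ — yields $\|H\|_{L^1_{v_s}}=\int_{\rdd}\sup_u|V_\Phi\sigma(u,\eta)|\,v_s(\eta)\,d\eta=\|\sigma\|_{M^{\infty,1}_{1\otimes v_s}}<\infty$. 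For $\Leftarrow$, given $H\in L^1_{v_s}(\rdd)$ with the stated bound, I would fix $(u,\zeta)\in\rdd\times\rdd$, solve the linear system $(w+z)/2=u$, $w-z=-j(\zeta)$ (a bijection of $\rdd\times\rdd$), substitute into \eqref{pp:key} to get $|V_\Phi\sigma(u,\zeta)|\le H(-j(\zeta))$, take the supremum over $u$, and integrate against $v_s$; undoing the rotation gives $\|\sigma\|_{M^{\infty,1}_{1\otimes v_s}}\le\|H\|_{L^1_{v_s}}<\infty$, hence $\sigma\in M^{\infty,1}_{1\otimes v_s}(\rdd)$.

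Part (ii) runs along the same lines, with the $L^1_{v_s}$-norm replaced by the weighted supremum. Since $j$ is orthogonal, $v_s(j(w-z))=v_s(w-z)=\langle w-z\rangle^s$, so \eqref{pp:key} gives $|\langle\sigma^w\pi(z)g,\pi(w)g\rangle|\le\langle w-z\rangle^{-s}\sup_{u,\zeta}|V_\Phi\sigma(u,\zeta)|v_s(\zeta)=\langle w-z\rangle^{-s}\|\sigma\|_{M^\infty_{1\otimes v_s}}$, which is \eqref{eq:kh9s} with $C\lesssim\|\sigma\|_{M^\infty_{1\otimes v_s}}$; conversely, given \eqref{eq:kh9s} and any $(u,\zeta)$, choosing $w,z$ as above yields $|V_\Phi\sigma(u,\zeta)|\le C\langle w-z\rangle^{-s}=C\,v_s(\zeta)^{-1}$, whence $\|\sigma\|_{M^\infty_{1\otimes v_s}}\lesssim C$. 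Combining the two estimates gives $C\asymp\|\sigma\|_{M^\infty_{1\otimes v_s}}$, the implied constants accounting only for the freedom in the choice of window.

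I expect the one step needing genuine care to be the covariance identity for the cross-Wigner distribution behind \eqref{pp:key}: one must evaluate the oscillatory integral defining $W(\pi(w)g,\pi(z)g)$ and keep careful track of the quadratic and linear phase factors in order to recognise it as $W(g,g)$ translated to the midpoint $(w+z)/2$, modulated by $j(w-z)$, times a unimodular constant. Everything afterwards is bookkeeping — unwinding the two mixed-norm definitions, noting that the substitution induced by $j$ is harmless because $j$ is orthogonal, and using the window-independence of the modulation-space norms to legitimise the choice $\Phi=W(g,g)$.
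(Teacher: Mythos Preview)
Your argument is correct and is in fact the standard one. The paper itself does not give a proof of this proposition: it is quoted verbatim as a known result from \cite{charly06}, and the text moves on immediately after the statement. The approach you outline---the identity $\langle\sigma^w f,h\rangle=\langle\sigma,W(h,f)\rangle$, the covariance of the cross-Wigner distribution yielding $|\langle\sigma^w\pi(z)g,\pi(w)g\rangle|=|V_\Phi\sigma(\tfrac{w+z}{2},j(w-z))|$ with $\Phi=W(g,g)$, and then reading off the mixed norms---is exactly the route taken in \cite{charly06} (and in Chapter~14 of \cite{book}), so there is nothing to contrast here.
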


 In the context of Definition~\ref{def1.1} the class of pseudodifferential
operators with a symbol in $M^\infty_{1\otimes v_s} (\rdd )$ is just $FIO (\mathrm{Id},
s)$.

The previous characterization is the key idea to show that
the preceding pseudodifferential operators give rise to  a sub-algebra of $\mathcal{B}(L^2(\rd))$ (the algebra of linear bounded operators on $L^2(\rd)$) which is inverse-closed (or enjoys the so-called Wiener property), as contained in the following results \cite{charly06,GR,wiener31}.
\begin{theorem}\label{fund}
Assume that $\sigma \in  M^{\infty,1} _{1\otimes v_s}(\rdd ) $ (resp. $\sigma \in M^\infty _{1\otimes v_s}(\rdd )$, with $s>2d$). Then:

(i) \emph{Boundedness:} $\sigma ^w$ is bounded on every \modsp\ $M^{p,q}_m(\rd )$ for
$1\leq p,q \leq \infty   $ and every $v_s$-moderate weight $m$.

(ii) \emph{Algebra property:} If $\sigma _1 , \sigma _2 \in  M^{\infty,1} _{1\otimes v_s}(\rdd ) $ (resp. $\sigma \in M^\infty _{1\otimes v_s}(\rdd )$),
then  $\sigma _1^w \sigma _2^w = \tau ^w$ with a symbol $\tau \in
M^{\infty,1} _{1\otimes v_s}(\rdd ) $ (resp. $\sigma \in M^\infty _{1\otimes v_s}(\rdd )$).

(iii) \emph{Wiener property:} If $\sigma ^w $ is invertible on $\lrd
$, then $(\sigma ^w)\inv = \tau ^w$  with a symbol $\tau \in
M^{\infty,1} _{1\otimes v_s}(\rdd ) $ (resp. $\sigma \in M^\infty _{1\otimes v_s}(\rdd )$).
\end{theorem}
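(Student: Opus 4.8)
The plan is to reduce all three assertions, via the Gabor-matrix characterizations of Proposition~\ref{charpsdo}, to statements about the off-diagonal decay of the continuous kernels $k(w,z)=\la \sigma^w\pi(z)g,\pi(w)g\ra$ from \eqref{GbM}; this is precisely the mechanism of \cite{charly06,GR,wiener31}. For the boundedness statement (i), I would start from the representation \eqref{KGbM}, which gives $|V_g(\sigma^w f)(w)|\le\int_{\rdd}|k(w,z)|\,|V_gf(z)|\,dz$, and insert the bounds of Proposition~\ref{charpsdo}: $|k(w,z)|\le H(w-z)$ with $H\in L^1_{v_s}(\rdd)$ in the $M^{\infty,1}_{1\otimes v_s}$ case, and $|k(w,z)|\le C\la w-z\ra^{-s}$, with $\la\cdot\ra^{-s}$ integrable because $s>2d$, in the $M^\infty_{1\otimes v_s}$ case. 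Using the $v_s$-moderateness of $m$ in the form $m(w)\lesssim v_s(w-z)\,m(z)$, one dominates $m(w)\,|V_g(\sigma^w f)(w)|$ by the convolution of a fixed $L^1(\rdd)$ function against $m\,|V_gf|$, and a mixed-norm Young inequality, together with the window-independence of $M^{p,q}_m$, gives $\|\sigma^w f\|_{M^{p,q}_m}\lesssim\|f\|_{M^{p,q}_m}$; the precise bookkeeping of admissible moderate weights in the $M^\infty$ case is routine and I would not dwell on it.

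For the algebra property (ii), the crucial remark is that operator composition corresponds to the composition of the associated integral kernels. By part (i) with $p=q=2$ and $m\equiv1$, each $\sigma_j^w$ is bounded on $\lrd$, so $\tau^w:=\sigma_1^w\sigma_2^w$ is a bounded operator on $\lrd$ and, as a continuous map $\cS(\rd)\to\cS'(\rd)$, is the Weyl quantization of a unique $\tau\in\cS'(\rdd)$. Inserting the resolution of the identity \eqref{treduetre} between the two factors (with $\|g\|_2=1$) yields $\la\tau^w\pi(z)g,\pi(w)g\ra=\int_{\rdd}k_1(w,u)\,k_2(u,z)\,du$, whence $|\la\tau^w\pi(z)g,\pi(w)g\ra|\le(H_1\ast H_2)(w-z)$. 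Since $v_s$ is submultiplicative, $L^1_{v_s}(\rdd)$ is a convolution algebra and $H_1\ast H_2\in L^1_{v_s}$; in the $M^\infty_{1\otimes v_s}$ case one uses instead the standard estimate $\la\cdot\ra^{-s}\ast\la\cdot\ra^{-s}\lesssim\la\cdot\ra^{-s}$, valid since $s>2d$. The converse implications of Proposition~\ref{charpsdo}, applied to $\tau$, then place $\tau$ in $M^{\infty,1}_{1\otimes v_s}(\rdd)$, resp.\ $M^\infty_{1\otimes v_s}(\rdd)$.

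The Wiener property (iii) is where I expect the real difficulty to lie. By part (i) the class $\mathcal{A}_s=\{\sigma^w:\sigma\in M^{\infty,1}_{1\otimes v_s}(\rdd)\}$ (resp.\ with $M^\infty_{1\otimes v_s}$ and $s>2d$) embeds continuously in $\cB(\lrd)$; by part (ii) it is closed under composition; and it is closed under adjoints, since $(\sigma^w)^\ast=\overline{\sigma}^w$ and conjugation preserves these modulation spaces. Thus $\mathcal{A}_s$ is a Banach $\ast$-subalgebra of $\cB(\lrd)$, and the task is to show it is inverse-closed. The route I would follow, after \cite{GR,charly06}, is to transport the problem through a fixed Gabor frame $\mathcal{G}(g,\Lambda)$ with $g\in\cS(\rd)$: the Gabor-matrix map carries $\mathcal{A}_s$ into the Jaffard class of matrices with polynomial off-diagonal decay $\la\mu-\lambda\ra^{-s}$ when $s>2d$, resp.\ into the algebra of matrices convolution-dominated by a sequence in $\ell^1_{v_s}(\zdd)$, and both of these matrix algebras are known to be inverse-closed in $\cB(\ell^2)$. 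The two delicate points are: (a) that $L^2$-invertibility of $\sigma^w$ is equivalent to $\ell^2$-invertibility of its Gabor matrix; and (b) that, after inverting inside the matrix algebra, one can reconstruct the Weyl symbol of $(\sigma^w)\inv$ from the inverse matrix and check, via the converse part of Proposition~\ref{charpsdo}, that it belongs to the relevant modulation space. These are the technical core of the argument and are where I would spend the bulk of the effort; by comparison, (i) and (ii) are routine.
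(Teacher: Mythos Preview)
The paper does not supply its own proof of this theorem: it is stated as a known result, with the sentence immediately preceding it pointing to \cite{charly06,GR,wiener31} and remarking that the Gabor-matrix characterization of Proposition~\ref{charpsdo} is ``the key idea'' behind it. Your sketch is precisely an outline of the arguments in those references---the convolution/Young-inequality mechanism for (i), the kernel-composition plus subconvolutivity of $v_s^{-1}$ (resp.\ the $L^1_{v_s}$ convolution algebra) for (ii), and the passage to Jaffard-type or convolution-dominated matrix algebras via a Gabor frame for (iii)---so there is no discrepancy in approach. Your identification of (iii), and in particular of the two ``delicate points'' (a) and (b), as the only place with genuine content is accurate; the paper itself only spells out the elementary inductive step behind (ii) in the display~\eqref{eq:kh3} following the theorem.
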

The algebra property represents one of the main ingredients in the study of the properties of the Schr\" odinger propagator $e^{i t H}$ developed in Section $5$.  In particular, the kernel of the composition of $n$ pseudodifferential operators $\sigma_j^w$, with $\sigma _j \in M^\infty_{1\otimes v_s}(\rdd)$, $j=1, \dots ,n $, and $s>2d$, satisfies
  \begin{equation}
    \label{eq:kh3}
    |\langle \sigma _1^w \sigma _2 ^w \dots \sigma _n^w \pi (z)g, \pi
    (w)g\rangle | \leq C_0 C_1\dots C_n\la  w-z\ra^{-s} \, \quad
    w,z\in \rdd,
  \end{equation}
  where \begin{equation}\label{costj}
    C_j\asymp \|\sigma_j\|_{M^{\infty}_{1\otimes v_s}}
    \end{equation}
and $C_0>0$ depends only on $s$.
The proof is straightforward: in fact, the characterization of Proposition~\ref{charpsdo}  says that $\sigma _j \in M^\infty_{1\otimes v_s} (\rdd )$ if and only if
$   |\langle \sigma _j^w \pi (z)g, \pi
    (w)g\rangle | \leq C_j \la  w-z\ra^{-s}  $ and the
    result follows from  induction and the fact that, for $s>2d$, the weights $v_s$ are subconvolutive: $v_s^{-1}\ast v_s^{-1}\leq C_0\, v_s^{-1}$ (cf. \cite[Lemma 11.1.1]{book}).

\section{Properties of the class $FIO(\A,s)$}
Generalized metaplectic operators solve evolution equations of the form \eqref{C1}-\eqref{C1bis}, when the perturbation is a pseudodifferential operator with symbol in the classes $S^s_w$.
They were introduced and studied in \cite{Wiener}, as classes of Fourier integral operators (FIOs) associated with  linear symplectic transformations of the phase-space. We observe that this work studies also FIOs associated to more general symplectomorphisms.  \par
As already mentioned, the crucial property of the generalized metaplectic operators we shall need in the study of Schr\" odinger equations is the algebra property of the class $FIO(\A,s)$.
However, for sake of completeness, we recall the main properties of this class, obtained in~\cite{Wiener}, and also other properties
 peculiar of the weighted versions of the Sj{\"o}strand class, i.e., the modulation spaces $M^{\infty,1}_{1\otimes v_s}(\rdd)$, cf. \cite{MetapWiener13}.

First of all,  the definition of the class $FIO(\cA ,s)$ is independent of the  window function $g\in\cS(\rd)\setminus\{0\}$ chosen in the Definition \ref{def1.1}. The class $FIO(\cA ,s)$ is nonempty: every classical metaplectic operator $\mu(\A)$ satisfies \eqref{eq:kh21} for every $s\geq 0$ and so $\mu(\A)\in FIO(\cA ,s)$ for every $s\geq 0$. \par
Using the estimate of the Gabor matrix  for a generalized metaplectic operator in \eqref{eq:kh21} and repeating similar  arguments as for the classical metaplectic operators $\mu(\A)$ in the previous section, we obtain the following boundedness results for the class $FIO(\cA ,s)$:
\begin{theorem}\label{T31}
Fix  $\A\in\Spnr$, $s>2d$, $m\in\cM_{v_s}$ and $T\in FIO(\A,s)$. Then $T$ extends to a bounded operator from $M^p_{m\circ\A}(\rd)$ to $M^p_{m}(\rd)$, $1\leq p\leq
\infty$.\end{theorem}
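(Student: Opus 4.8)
The plan is to deduce the statement from the two boundedness results already at our disposal: the boundedness \eqref{contmua} of the metaplectic factor $\mu(\A)$, and the boundedness in Theorem~\ref{fund}(i) of a pseudodifferential operator with symbol in $M^\infty_{1\otimes v_s}(\rdd)$. The link between a generic $T\in FIO(\A,s)$ and these two is the factorization $T=\mu(\A)\,\sigma^w$ with $\sigma\in S^s_w=M^\infty_{1\otimes v_s}(\rdd)$, i.e.\ Theorem~\ref{pseudomu}, which I would establish as follows. Fix $g\in\cS(\rd)$ with $\|g\|_2=1$ and write $T=\mu(\A)\,\big(\mu(\A)^{-1}T\big)=\mu(\A)\,\big(\mu(\A^{-1})T\big)$ (a unimodular phase factor, immaterial here, may occur). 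Applying the inversion formula \eqref{invformula} to $T\pi(z)g$ and then the operator $\mu(\A^{-1})$, the Gabor matrix of $\mu(\A^{-1})T$ reads
\[
\langle \mu(\A^{-1})T\,\pi(z)g,\pi(w)g\rangle=\intrdd \langle T\pi(z)g,\pi(u)g\rangle\,\langle \mu(\A^{-1})\pi(u)g,\pi(w)g\rangle\,du .
\]
By \eqref{asterisco}, by Lemma~\ref{lkh1}(i) applied to $\mu(\A^{-1})$ with a large exponent $N\ge s$, by the change of variables $u=\A v$ (Jacobian $1$) and the equivalence of norms \eqref{pesieq}, namely $\langle\A\,\cdot\,\rangle\asymp\langle\cdot\rangle$, we obtain
\[
\big|\langle \mu(\A^{-1})T\,\pi(z)g,\pi(w)g\rangle\big|\le C\intrdd \langle v-z\rangle^{-s}\langle w-v\rangle^{-s}\,dv\le C'\langle w-z\rangle^{-s},
\]
the last step being the subconvolutivity of $v_s$ for $s>2d$ (the very estimate used in \eqref{eq:kh3}). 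By Proposition~\ref{charpsdo}(ii) this forces $\mu(\A^{-1})T=\sigma^w$ with $\sigma\in M^\infty_{1\otimes v_s}(\rdd)$; hence $T=\mu(\A)\sigma^w$.

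Next I would check that the transported weight $m\circ\A$ is again $v_s$-moderate: for $z,\zeta\in\rdd$,
\[
m\big(\A(z+\zeta)\big)=m(\A z+\A\zeta)\le C\,v_s(\A z)\,m(\A\zeta)\le C'\,v_s(z)\,m(\A\zeta),
\]
where the last inequality is again \eqref{pesieq}. Thus $m\circ\A\in\cM_{v_s}$.

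With the factorization and this remark in hand, the conclusion is immediate. Since $\sigma\in M^\infty_{1\otimes v_s}(\rdd)$ and $s>2d$, Theorem~\ref{fund}(i) gives that $\sigma^w$ is bounded on $M^p_{m\circ\A}(\rd)$ for every $1\le p\le\infty$; and \eqref{contmua} gives that $\mu(\A)$ maps $M^p_{m\circ\A}(\rd)$ boundedly into $M^p_m(\rd)$. Composing, for $f\in\cS(\rd)$,
\[
\|Tf\|_{M^p_m}=\|\mu(\A)(\sigma^w f)\|_{M^p_m}\le C_s\,\|\sigma^w f\|_{M^p_{m\circ\A}}\le C\,\|f\|_{M^p_{m\circ\A}},
\]
and $T$ extends to all of $M^p_{m\circ\A}(\rd)$ by density of $\cS(\rd)$ when $p<\infty$, and by running the same STFT estimate directly (or by weak-$\ast$ continuity) when $p=\infty$.

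The only genuinely delicate point is the factorization step, that is, showing that $\mu(\A^{-1})T$ is a true pseudodifferential operator with symbol in $M^\infty_{1\otimes v_s}$; this is exactly where the hypothesis $s>2d$ is consumed, through the subconvolutivity $\langle\cdot\rangle^{-s}\ast\langle\cdot\rangle^{-s}\lesssim\langle\cdot\rangle^{-s}$. By contrast, a direct attempt relying only on the pointwise bound $|\langle T\pi(z)g,\pi(w)g\rangle|\le C\langle w-\A z\rangle^{-s}$ together with the $v_s$-moderateness of $m$ does not succeed: it produces a Schur-type kernel $\langle w-\A z\rangle^{-s}\,m(w)/m(\A z)$ which is merely bounded and not integrable, so passing through the factorization is essential. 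Everything else reduces to a routine composition of the results recalled above.
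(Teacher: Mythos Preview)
Your reduction is valid, but it is not the route the paper takes. The paper proves Theorem~\ref{T31} \emph{directly}: one uses the representation \eqref{KGbM} and the Gabor-matrix decay \eqref{asterisco} to write
\[
|V_g(Tf)(w)|\,m(w)\le C\intrdd \langle w-\cA z\rangle^{-s}\,m(w)\,|V_g f(z)|\,dz,
\]
invokes the $v_s$-moderateness of $m$, and concludes by Young's inequality---exactly the argument displayed just above for the classical metaplectic case. Your approach instead establishes first the factorization $T=\mu(\cA)\sigma^w$ (the content of Theorem~\ref{pseudomu}, which in the paper comes \emph{after} Theorem~\ref{T31}) and then composes two black-box continuity statements, \eqref{contmua} and Theorem~\ref{fund}(i). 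Your self-contained proof of the factorization direction is correct and clean.

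Your last paragraph, however, is off the mark. You claim the direct approach ``does not succeed'' because the weighted Schur kernel $\langle w-\cA z\rangle^{-s}\,m(w)/m(\cA z)$ is merely bounded, and that the factorization is therefore ``essential''. But Theorem~\ref{fund}(i) for symbols in $M^\infty_{1\otimes v_s}(\rdd)$---the black box you lean on---is itself proved by precisely the same Young/Schur estimate with kernel $\langle w-z\rangle^{-s}\,m(w)/m(z)$, i.e.\ the direct argument in the special case $\cA=\mathrm{Id}$. Whatever analytic difficulty you locate in the direct approach is therefore already sitting inside Theorem~\ref{fund}(i); the factorization does not remove it, it only relocates it. So the two routes stand or fall together, and the asserted necessity of your detour is illusory.
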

In particular, since the weights $v_s$ and $v_s\circ \A$ are equivalent for every $s\in\bR$ and $\A\in Sp(d,\bR)$,  the generalized metaplectic operator $T$ in Theorem \ref{T31} is bounded on $M^p_{v_s}(\rd )$ and on $M^p_{1/v_s}(\rd )$.
The previous theorem for $p=2$ and $m\equiv 1$ says that the classes $FIO(\A,s)$ are subclasses of $\mathcal{B} (\lrd )$. The next results show that their union $FIO(\Spnr,s)$ is indeed an algebra in $\mathcal{B} (\lrd )$ which enjoys the property of inverse-closedness:
\begin{theorem}\label{prod1}
We have:\\
(i) If $T^{(i)}\in FIO(\A_i,s_i)$,  $\A_i\in\Spnr$,  with $s_i>2d$, $i=1,2$; then  $T^{(1)}T^{(2)}\in FIO(\A_1\circ \A_2, s)$
with $s=\min(s_1,s_2)$. Consequently,  the class $FIO(\Spnr,s)$ is an algebra with respect to the composition of operators.\\
(ii) If $T\in FIO(\A,s)$,  $s>2d$, and   $T$ is invertible on $L^2(\rd)$, then $T^{-1} \in FIO(\A^{-1},s)$. Consequently, the algebra $FIO(\Spnr,s)$ is inverse-closed in $\mathcal{B} (\lrd )$.
\end{theorem}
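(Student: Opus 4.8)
The entire statement rests on reducing the composition and inversion problems for $FIO(\A,s)$ to the corresponding, already-known, statements for the pseudodifferential algebra $M^\infty_{1\otimes v_s}(\rdd)$ (Theorem \ref{fund}), transported along metaplectic operators. The plan is to establish first the structural factorization: every $T\in FIO(\A,s)$ can be written $T=\mu(\A)\,\sigma^w$ with $\sigma\in M^\infty_{1\otimes v_s}(\rdd)$ (and equivalently $T=\tau^w\,\mu(\A)$ with $\tau\in M^\infty_{1\otimes v_s}$). This is precisely Theorem \ref{pseudomu}, which I may assume. Conversely, any such product lies in $FIO(\A,s)$: the Gabor matrix of $\mu(\A)\sigma^w$ is estimated by inserting a resolution of the identity (the inversion formula \eqref{treduetre}) between $\mu(\A)$ and $\sigma^w$, giving $\langle \mu(\A)\sigma^w\pi(z)g,\pi(w)g\rangle$ as an integral of $\langle\mu(\A)\pi(u)g,\pi(w)g\rangle\langle\sigma^w\pi(z)g,\pi(u)g\rangle$, then using \eqref{eq:kh1}, \eqref{eq:kh9s} and the subconvolutivity $v_s^{-1}\ast v_s^{-1}\leq C_0 v_s^{-1}$ for $s>2d$ to recover the decay $\la w-\A z\ra^{-s}$.

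For part (i), given $T^{(i)}\in FIO(\A_i,s_i)$, write $T^{(i)}=\mu(\A_i)\sigma_i^w$ with $\sigma_i\in M^\infty_{1\otimes v_{s_i}}$. Then
\begin{equation*}
T^{(1)}T^{(2)}=\mu(\A_1)\sigma_1^w\mu(\A_2)\sigma_2^w
=\mu(\A_1)\mu(\A_2)\big(\mu(\A_2)^{-1}\sigma_1^w\mu(\A_2)\big)\sigma_2^w.
\end{equation*}
Now $\mu(\A_1)\mu(\A_2)=c\,\mu(\A_1\circ\A_2)$ for a unimodular constant $c$ by the intertwining relation \eqref{metap}, and the metaplectic conjugation $\mu(\A_2)^{-1}\sigma_1^w\mu(\A_2)$ is again a Weyl operator, with symbol $\sigma_1\circ\A_2$ (the symplectic covariance of Weyl calculus), which by Lemma \ref{lkh1}(ii) belongs to $M^\infty_{1\otimes v_{s_1}}$. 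Hence $T^{(1)}T^{(2)}=c\,\mu(\A_1\circ\A_2)\big((\sigma_1\circ\A_2)^w\sigma_2^w\big)$, and the algebra property Theorem \ref{fund}(ii), applied in the smaller space $M^\infty_{1\otimes v_s}$ with $s=\min(s_1,s_2)$, gives $(\sigma_1\circ\A_2)^w\sigma_2^w=\tau^w$ with $\tau\in M^\infty_{1\otimes v_s}$. By the factorization criterion this means $T^{(1)}T^{(2)}\in FIO(\A_1\circ\A_2,s)$. Taking $\A_1=\A_2^{-1}$ and all $s_i$ equal shows closure under composition within a fixed $s$, so $FIO(\Spnr,s)$ is an algebra.

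For part (ii), let $T\in FIO(\A,s)$ be invertible on $L^2$, and write $T=\mu(\A)\sigma^w$. Then $\sigma^w=\mu(\A)^{-1}T$ is invertible on $L^2$ as a product of invertibles, so by the Wiener property Theorem \ref{fund}(iii) its inverse is $\tau^w$ with $\tau\in M^\infty_{1\otimes v_s}$; consequently
\begin{equation*}
T^{-1}=(\sigma^w)^{-1}\mu(\A)^{-1}=\tau^w\,\mu(\A^{-1})
=\mu(\A^{-1})\big(\mu(\A^{-1})^{-1}\tau^w\mu(\A^{-1})\big)
=\mu(\A^{-1})(\tau\circ\A^{-1})^w,
\end{equation*}
up to a unimodular constant, with $\tau\circ\A^{-1}\in M^\infty_{1\otimes v_s}$ again by Lemma \ref{lkh1}(ii). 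Hence $T^{-1}\in FIO(\A^{-1},s)$, and inverse-closedness of $FIO(\Spnr,s)$ in $\mathcal{B}(\lrd)$ follows.

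The main obstacle is not any single estimate but assembling the pieces cleanly: one must have in hand the two-sided factorization $T=\mu(\A)\sigma^w=\tau^w\mu(\A)$ with symbols in the correct weighted Sj\"ostrand class (Theorem \ref{pseudomu}), the symplectic covariance identity $\mu(\A)^{-1}\sigma^w\mu(\A)=(\sigma\circ\A)^w$, and the stability of $M^\infty_{1\otimes v_s}$ under the substitution $\sigma\mapsto\sigma\circ\A$ (Lemma \ref{lkh1}(ii), which crucially uses $s\geq 0$ and the equivalence \eqref{pesieq} of $v_s$ and $v_s\circ\A$). Once these three facts are granted, both parts are purely formal manipulations; the quantitative content — that the composition lands in $FIO$ with the \emph{minimum} of the two decay exponents — is exactly the requirement $s>2d$ forcing subconvolutivity of $v_s^{-1}$, which is already packaged in Theorem \ref{fund}. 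One should also verify that the composition $T^{(1)}T^{(2)}$ makes sense as a map $\cS\to\cS'$ before manipulating it, which follows from the continuity of generalized metaplectic operators on $\cS$ and $\cS'$ recorded earlier in this section.
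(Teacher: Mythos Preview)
The paper does not prove Theorem \ref{prod1}; it is recalled without proof from \cite{Wiener} (see the opening of Section~3). So there is no in-paper argument to compare against directly.

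Your proof is correct. You reduce both assertions to the pseudodifferential algebra $M^\infty_{1\otimes v_s}$ via the factorization Theorem~\ref{pseudomu}, symplectic covariance of the Weyl calculus, and Lemma~\ref{lkh1}(ii); this is clean and conceptually the right picture. One caution: since Theorem~\ref{pseudomu} is also imported from \cite{Wiener}, you should verify there is no circularity in that reference (i.e.\ that the factorization is established there without already invoking the algebra/Wiener property of $FIO(\Spnr,s)$). In fact the natural route in \cite{Wiener} for part~(i) is more elementary and avoids factorization altogether: insert the resolution of the identity once, bound
\[
|\langle T^{(1)}T^{(2)}\pi(z)g,\pi(w)g\rangle|
\leq C\int_{\rdd}\langle w-\A_1 u\rangle^{-s_1}\langle u-\A_2 z\rangle^{-s_2}\,du,
\]
change variables $u\mapsto \A_1^{-1}u$ (using $|\det\A_1|=1$ and \eqref{pesieq}), and conclude by the subconvolutivity $v_{s_1}^{-1}\ast v_{s_2}^{-1}\leq C\,v_s^{-1}$ for $s=\min(s_1,s_2)>2d$. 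Your approach has the advantage of making the algebra structure transparent (everything is a twist of the pseudodifferential algebra by metaplectic operators); the direct Gabor-matrix argument has the advantage of being self-contained. For part~(ii) the two approaches essentially coincide: one way or another one must exhibit $T\mu(\A)^{-1}$ (or $\mu(\A)^{-1}T$) as an invertible Weyl operator with symbol in $M^\infty_{1\otimes v_s}$ and then invoke Theorem~\ref{fund}(iii). A minor remark: your closing sentence ``Taking $\A_1=\A_2^{-1}$\dots'' is not needed; the algebra statement for fixed $s$ follows immediately from the main claim with $s_1=s_2=s$.
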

Finally, the operators in the classes $FIO(\A,s)$ are not \emph{abstract ghosts} since they can be explicitly written as a simple composition of a classical metaplectic operator and a pseudodifferential operator, as expressed in \cite[Theorem 5.4]{Wiener}, recalled below.
\begin{theorem}\label{pseudomu}
Fix $s>2d$ and $\cA \in \Spnr $.  A linear continuous operator $T:
\cS(\rd)\to\cS'(\rd)$ is in   $FIO(\A,s)$ if and only if there
exist symbols $\sigma_1, \sigma_2 \in
M^{\infty}_{1\otimes v_s}(\rdd)$, 
such that
\begin{equation}\label{pseudomu1}
T=\sigma_1^w(x,D)\mu(\A)\quad \mbox{and}\quad
T=\mu(\A)\sigma^w_2(x,D).
\end{equation}
The symbols $\sigma _1$ and $\sigma _2$ are related by
\begin{equation}\label{hormander}\sigma_2=\sigma_1\circ\A.\end{equation}
\end{theorem}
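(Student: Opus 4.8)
The plan is to deduce both implications from the intertwining relation \eqref{metap}, which, upon conjugation by $\mu(\A)$, interchanges the ``$\A$-shifted'' off-diagonal decay characterizing $FIO(\A,s)$ with the ``diagonal'' decay characterizing the pseudodifferential class $M^\infty_{1\otimes v_s}(\rdd)$ via Proposition~\ref{charpsdo}(ii). Throughout I fix a window $g\in\cS(\rd)\setminus\{0\}$ and freely use that membership in $FIO(\A,s)$, as well as the validity of the estimate \eqref{eq:kh9s}, does not depend on the (possibly distinct) Schwartz windows chosen for the input and the output time-frequency shifts.

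\emph{Sufficiency.} Suppose $T=\sigma_1^w\mu(\A)$ with $\sigma_1\in M^\infty_{1\otimes v_s}(\rdd)$. Rewriting \eqref{metap} as $\mu(\A)\pi(z)=\gamma\,\pi(\A z)\mu(\A)$ for a suitable unimodular constant $\gamma$ and setting $\tilde g:=\mu(\A)g\in\cS(\rd)$, I would compute
\begin{equation*}
\langle T\pi(z)g,\pi(w)g\rangle=\gamma\,\langle \sigma_1^w\pi(\A z)\tilde g,\pi(w)g\rangle,
\end{equation*}
and Proposition~\ref{charpsdo}(ii), applied with the windows $\tilde g$ and $g$, bounds the right-hand side by $C\langle w-\A z\rangle^{-s}$, so that $T\in FIO(\A,s)$. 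For the factorization $T=\mu(\A)\sigma_2^w$ I would instead move $\mu(\A)$ onto the output vector: by unitarity of $\mu(\A)$ and \eqref{metap} one gets $\langle T\pi(z)g,\pi(w)g\rangle=\gamma'\,\langle \sigma_2^w\pi(z)g,\pi(\A\inv w)\mu(\A)\inv g\rangle$, which is $O(\langle \A\inv w-z\rangle^{-s})=O(\langle \A\inv(w-\A z)\rangle^{-s})$, and \eqref{pesieq} converts this into $O(\langle w-\A z\rangle^{-s})$; again $T\in FIO(\A,s)$.

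\emph{Necessity and the relation $\sigma_2=\sigma_1\circ\A$.} Given $T\in FIO(\A,s)$, I would simply \emph{define} $\sigma_1^w:=T\mu(\A)\inv$ and $\sigma_2^w:=\mu(\A)\inv T$. Since $\mu(\A)\inv$ preserves $\cS(\rd)$, both are continuous operators $\cS(\rd)\to\cS'(\rd)$, hence have Weyl symbols $\sigma_1,\sigma_2\in\cS'(\rdd)$ by the Schwartz kernel theorem, and $T=\sigma_1^w\mu(\A)=\mu(\A)\sigma_2^w$ by construction. To put $\sigma_1$ in $M^\infty_{1\otimes v_s}(\rdd)$ it suffices, by Proposition~\ref{charpsdo}(ii), to estimate its Gabor matrix: with $h:=\mu(\A)\inv g\in\cS(\rd)$ and \eqref{metap},
\begin{equation*}
\langle \sigma_1^w\pi(z)g,\pi(w)g\rangle=\gamma\,\langle T\pi(\A\inv z)h,\pi(w)g\rangle=O\big(\langle w-\A(\A\inv z)\rangle^{-s}\big)=O\big(\langle w-z\rangle^{-s}\big),
\end{equation*}
and analogously $\langle \sigma_2^w\pi(z)g,\pi(w)g\rangle=\gamma'\,\langle T\pi(z)g,\pi(\A w)\tilde g\rangle=O(\langle \A(w-z)\rangle^{-s})=O(\langle w-z\rangle^{-s})$ by \eqref{pesieq}. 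Finally, $\sigma_1^w\mu(\A)=\mu(\A)\sigma_2^w$ rewrites as $\sigma_2^w=\mu(\A)\inv\sigma_1^w\mu(\A)$, and the symplectic covariance of the Weyl calculus, $\mu(\A)\inv\sigma^w\mu(\A)=(\sigma\circ\A)^w$ (see e.g.\ \cite{folland89}), yields $\sigma_2=\sigma_1\circ\A$, in agreement with Lemma~\ref{lkh1}(ii), which independently guarantees $\sigma_1\circ\A\in M^\infty_{1\otimes v_s}(\rdd)$.

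The computations are routine; the only step that needs care is the repeated appeal to window-independence, and this is exactly where the standing hypothesis $s>2d$ enters. Changing the Schwartz windows in a Gabor matrix amounts to convolving it against the rapidly decaying cross-STFTs of the old windows relative to the new ones, and the decay $\langle\cdot\rangle^{-s}$ (equivalently, $v_s\inv$) is preserved under such a convolution precisely because $v_s\inv$ is subconvolutive for $s>2d$ (as recalled after \eqref{eq:kh3}). I do not anticipate any deeper obstacle: once the window-independence is in hand, the theorem reduces to the bookkeeping identity that conjugation by $\mu(\A)$ interchanges $\A$-shifted and diagonal off-diagonal decay.
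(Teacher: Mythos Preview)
The paper does not supply its own proof of Theorem~\ref{pseudomu}; it merely quotes the result from \cite[Theorem~5.4]{Wiener} and adds, as sole justification, that \eqref{hormander} follows from the symplectic covariance $\mu(\A)\inv\sigma^w\mu(\A)=(\sigma\circ\A)^w$ --- exactly the argument you give. Your proof is correct and is the standard one: compose with $\mu(\A)^{\pm1}$, use \eqref{metap} to trade $\A$-shifted decay for diagonal decay, and invoke Proposition~\ref{charpsdo}(ii); your explanation of why $s>2d$ is needed (subconvolutivity of $v_s\inv$, hence window-independence for both Definition~\ref{def1.1} and the two-window version of \eqref{eq:kh9s}) is also on target.
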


 The characterization \eqref{pseudomu1} works also for other $\tau$-forms of pseudodifferential operators $a_\tau(x,D)$, $\tau\in [0,1]$, where the $\tau$-quantization of a symbol $a(x,\xi)$ on the phase-space is formally defined by
 $$a\mapsto a_\tau(x,D) f=\iint_{\rdd} e^{2\pi i(x-y)\xi} a(\tau x+(1-\tau)y,\xi) f(y) dy\, d\xi$$
($\tau=1/2$ is the Weyl quatization whereas $\tau=1$ is the Kohn-Nirenberg correspondence). Instead \eqref{hormander} is peculiar of the Weyl correspondence and is a consequence of  the symplectic invariance property of the Weyl calculus (e.g. \cite[Theorem 18.5.9]{hormander3}):
$$ \mu (A)\inv \sigma^w\mu(\A) = \mu (\A ) (\sigma \circ \A ) ^w. $$

For $T\in FIO(\A,s)$ with  $\A=\begin{pmatrix} A&B\\C&D\end{pmatrix}\in
Sp(d,\R)$ satisfying the additional condition $ \det A\not=0$ we have the following characterization, that can be proved by using the same arguments as those in the result \cite[Theorem 5.1]{MetapWiener13} (related to symbols in the classes $M^{\infty,1}_{1\otimes v_s}(\rdd)$). We leave the details to the interested reader.
\begin{theorem}
$T\in
FIO(\mathcal{A},s)$ if and only if $T$ has the following integral representation
\begin{equation}\label{fiotipo1}
Tf(x)=\int_{\rd} e^{2\pi i \Phi\phas} \sigma\phas \hat{f}(\xi)d\xi
\end{equation}
with the  phase  $ \Phi(x,\xi)=\frac12  x CA^{-1}x+
\xi  A^{-1} x-\frac12\xi  A^{-1}B\xi$ and a  symbol $\sigma\in M^{\infty}_{1\otimes v_s}(\rdd)$.
\end{theorem}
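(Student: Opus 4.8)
The plan is to reduce the statement to Theorem~\ref{pseudomu} and then to make the factorization appearing there explicit, exploiting the hypothesis $\det A\neq 0$; the whole argument runs parallel to that of \cite[Theorem~5.1]{MetapWiener13}, the only difference being that the Sj\"ostrand-type class $M^{\infty,1}_{1\otimes v_s}$ is replaced throughout by $M^{\infty}_{1\otimes v_s}$, and Proposition~\ref{charpsdo}(ii) is used in place of part~(i).

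First I would record that, thanks to the symplectic relations $A^TC=C^TA$ and $A^TD-C^TB=I$, the assumption $\det A\neq 0$ yields the factorization
\begin{equation*}
\mathcal{A}=\begin{pmatrix} I&0\\ CA^{-1}&I\end{pmatrix}\begin{pmatrix} A&0\\ 0&(A^T)^{-1}\end{pmatrix}\begin{pmatrix} I&A^{-1}B\\ 0&I\end{pmatrix},
\end{equation*}
since then $D=(A^T)^{-1}+CA^{-1}B$. By the (projective) representation property of the metaplectic group, $\mu(\mathcal{A})$ equals, up to a unimodular constant, the composition of the three metaplectic operators attached to these factors. Using the explicit formulas \eqref{diag}, \eqref{lower} and \eqref{iot} (the upper triangular factor being $J$ conjugated to a lower triangular chirp) I would compute these three operators and compose them, obtaining
\begin{equation*}
\mu(\mathcal{A})f(x)=c_{\mathcal{A}}\,(\det A)^{-1/2}\int_{\rd} e^{2\pi i\Phi(x,\xi)}\hat f(\xi)\,d\xi,\qquad |c_{\mathcal{A}}|=1,
\end{equation*}
with exactly the phase $\Phi$ of the statement. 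Thus $\mu(\mathcal{A})$ is already an operator of the form \eqref{fiotipo1} with constant symbol.

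For the forward direction (an operator in $FIO(\mathcal{A},s)$ admits the representation \eqref{fiotipo1}), I would invoke Theorem~\ref{pseudomu} to write $T=\mu(\mathcal{A})\sigma_2^w$ with $\sigma_2\in M^{\infty}_{1\otimes v_s}(\rdd)$, insert the Weyl kernel of $\sigma_2^w$ together with the integral representation of $\mu(\mathcal{A})$ just obtained, and carry out the (exact, Gaussian) integrations in the intermediate variables so as to bring $T$ into the form \eqref{fiotipo1}. The symbol $\sigma$ that emerges is obtained from $\sigma_2$ by a fixed chain of operations: a linear change of variables in phase space driven by the blocks $A$ and $A^{-1}B$, multiplication by a chirp, and a partial Fourier transform. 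I would then check that each of these preserves $M^{\infty}_{1\otimes v_s}(\rdd)$ with controlled norm — the linear substitution by \eqref{pesieq} and Lemma~\ref{lkh1}(ii); the chirp because it amounts to conjugation/composition with a metaplectic operator, again covered by Lemma~\ref{lkh1}(ii); the partial Fourier transform because the weight $1\otimes v_s$ is invariant under it — so that $\sigma\in M^{\infty}_{1\otimes v_s}(\rdd)$. For the converse I would run the same chain backwards: since the phase of a $T$ of the form \eqref{fiotipo1} coincides with that of $\mu(\mathcal{A})$, composing on the right with $\mu(\mathcal{A})^{-1}=\mu(\mathcal{A}^{-1})$ (up to a unimodular constant, again of the form \eqref{fiotipo1} by the first step) cancels the oscillation and leaves a pseudodifferential operator $\sigma_2^w$, and tracking $\sigma$ through the inverse elementary moves shows $\sigma_2\in M^{\infty}_{1\otimes v_s}(\rdd)$; then $T=\mu(\mathcal{A})\sigma_2^w\in FIO(\mathcal{A},s)$ by Theorem~\ref{pseudomu}.

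The step I expect to be the main obstacle is the symbolic bookkeeping in this last part: performing the Gaussian integrations so that exactly the phase $\Phi$ is isolated, identifying explicitly the integral transform relating $\sigma_2$ and $\sigma$, and verifying that it and its inverse are bounded on $M^{\infty}_{1\otimes v_s}(\rdd)$. Once it is established that each elementary move (linear substitution, chirp multiplication, Fourier transform) is bounded on $M^{\infty}_{1\otimes v_s}$ — which is where Lemma~\ref{lkh1}(ii) and \eqref{pesieq} do the real work — the remainder is routine but computationally heavy, which is presumably why the authors leave the details to the reader.
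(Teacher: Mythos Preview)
Your proposal is correct and follows exactly the route the paper indicates: the authors do not give a proof but refer to \cite[Theorem~5.1]{MetapWiener13} and state that the same arguments apply with $M^{\infty}_{1\otimes v_s}$ in place of $M^{\infty,1}_{1\otimes v_s}$, which is precisely your plan (reduce to Theorem~\ref{pseudomu}, factor $\mathcal{A}$ via $\det A\neq 0$ to obtain the explicit phase $\Phi$, and track the symbol through the resulting elementary moves). The only point where your write-up is slightly loose is the justification that the partial Fourier transform preserves $M^{\infty}_{1\otimes v_s}$; in the actual computation the transforms that arise are metaplectic operators (chirp multiplication, chirp convolution, linear substitution) acting on the symbol, and their boundedness on $M^{\infty}_{1\otimes v_s}$ is what Lemma~\ref{lkh1}(ii) and \eqref{pesieq} provide, so you should phrase that step accordingly rather than invoking invariance of the weight directly.
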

An operator $T$ in the form \eqref{fiotipo1} is called a type I Fourier integral operator, with phase $\Phi$ and symbol $\sigma$.
If we drop the condition $\det A\not=0$ the operator $T$ is no more a FIO of type I, but certainly can be characterized by other suitable integral representations.   Integral representations for classical  metaplectic operators were studied by Morsche and Oonincx in \cite{MO2002}. Our object of further investigation will be to study the  compositions of the integral representations  in \cite{MO2002} with pseudodifferential operators and derive an integral expression for any $T\in FIO(\A,s)$.

\section{Metaplectic Operators as solutions of the unperturbed problem}

We first give a brief review concerning the solution of
\begin{equation}\label{C12}
\begin{cases} i \displaystyle\frac{\partial
u}{\partial t} +a^w u=0\\
u(0,x)=u_0(x),
\end{cases}
\end{equation}
when the symbol $a(x,\xi)$ is a quadratic form on the phase space. Writing $u(t,\cdot)=M_t u(0,\cdot)$, we obtain the following equation for the solution  operator $M_t$:
\begin{equation}\label{C12Mt}
\begin{cases} i \displaystyle\frac{d
M_t}{d t} +a^w M_t=0\\
M_0=I.
\end{cases}
\end{equation}
The properties of the operator $M_t$ are well-known an can be found in different textbooks. We refer to \cite{Gos11,folland89,taylor} and references therein. Moreover, we address to Voros \cite[Section 4.6]{voros} (see also  \cite[Section 3]{Wein85}) for a short survey of the subject,  from the viewpoint of group theory and geometrical quantization.

Let us denote by $\mathcal{P}_2$ the set of  real-valued polynomial
of degree $\leq 2$ on $\rdd$. This set is a Lie algebra with respect to the Poisson bracket:
$$ \{f,g\}=\sum_{j=1}^{d}\frac{\partial
f}{\partial x_j}\frac{\partial
g}{\partial \xi_j}-\frac{\partial
f}{\partial \xi_j}\frac{\partial
g}{\partial x_j},\quad f,g\in \mathcal{P}_2.
$$
Using the Weyl correspondence, to any $f,g\in \mathcal{P}_2$ correspond  the Weyl
operators $f^w, g^w$ with the following commutation relation
$$2\pi i[f^w, g^w]=- \{f,g\}^w,$$
so the mapping $f\mapsto if^w$ is an isomorphism between the Lie algebra $(\mathcal{P}_2,\{ \})$ and a Lie algebra of essentially skew-adjoint operators (observe that $f^w$ is essentially self-adjoint because $f$ is real-valued), spanned by the constant and the linear symplectic vector fields. Let us now restrict our attention to the  subalgebra $hp_2\subset \mathcal{P}_2$ of second order homogeneous polynomials on $\rdd$.
Consider the symplectic Lie algebra $ \spdr$ of $2d\times 2d$ matrices such that $J {}^t \mathbb{A}+ \mathbb{A}J=0$. Any given matrix
\begin{equation}
\label{matricesimp}
\mathbb{A}=\begin{pmatrix}
A&B\\C&-{}^t A\end{pmatrix}\in \spdr,
\end{equation}
with $A,B, C\in M(d,\bR)$, $B,C$ symmetric,  defines a quadratic form $\mathcal{P}_\mathbb{A}(x,\xi)$ in $\rdd$ via
the formula
\[
\mathcal{P}_\mathbb{A}(x,\xi)=-\frac{1}{2}{}^t(x,\xi)J\mathbb{A} (x,\xi)=\frac{1}{2}\xi\cdot
B\xi+\xi\cdot A x-\frac{1}{2}x\cdot Cx.
\]
The mapping $\mathcal{P}: \spdr \to hp_2$, such that $\mathbb{A}\mapsto \mathcal{P}_\mathbb{A}$, is an isomorphism between  the Lie algebra $(\spdr, [ \, ] )$ and the Lie subalgebra $(hp_2, \{  \})$.

From the Weyl quantization, the
quadratic polynomial $P_\mathbb{A}$ corresponds to the Weyl
operator
 $\mathcal{P}_\mathbb{A}^w$
 defined by
\[
\mathcal{P}_\mathbb{A}^w=-\frac{1}{8\pi^2}\sum_{j,k=1}^d
B_{j,k}\frac{\partial^2}{\partial
x_j\partial x_k}-\frac{i}{2\pi}\sum_{j,k=1}^d
A_{j,k}
x_j\frac{\partial}{\partial{x_k}}-
\frac{i}{4\pi}{\rm
Tr}(A)-\frac12 \sum_{j,k=1}^d C_{j,k}
x_jx_k.
\]
We define a representation $d\mu$ of the Lie algebra $\spdr$ by
$$ d\mu: \mathbb{A}\mapsto  i \mathcal{P}_\mathbb{A}^w.$$
The operator $i\mathcal{P}_\mathbb{A}^w$ is called the infinitesimal metaplectic operator corresponding to $\mathbb{A}$, or, equivalently, to $\mathcal{P}_\mathbb{A}$.  Hence the infinitesimal metaplectic operators form a Lie algebra of essentially skew-adjoint operators.

Next, general results about analytic vectors \cite[Appendix D]{taylor} imply that the representation $d\mu$  generates a unitary representation
$$\mu:\, \widetilde{Sp(d,\bR)}\to U(L^2(\rd))
$$
of the universal covering group $\widetilde{Sp(d,\bR)}$ of $Sp(d,\bR)$. Actually, $\mu$ can be exponentiated to a representation of the two-fold cover of $Sp(d,\bR)$, commonly called the metaplectic group and denoted by $Mp(d,\bR)$. The corresponding group of operators in $U(L^2(\rd))$ are called metaplectic operators. Such operators  preserve $\cS(\rd)$ and extend to $\cS'(\rd)$.

Suppose we are given a vector $i a^w$ with $a\in hp_2$ (that is, an infinitesimal metaplectic operator), then the Schr\"{o}dinger equation $d M_t/ dt= i a^w M_t$, ($M_0=I$), that is $\eqref{C12Mt}$, has a unique solution $M_t$ in $Mp(d,\bR)$ (or equivalently, there exists a unique metaplectic operator  $\mu(M_t)$ solution of the previous equation). The corresponding curve $\A_t=\pi(M_t)$, in $Sp(d,\bR)$ (where $\pi$  is the projection of $Mp(d,\bR)$ onto $Sp(d,\bR)$), describes the motion of the classical system with the hamiltonian $a^w$. By abuse of notation, since the ambiguity is only a matter of sign, from now onward we simply write $\mu(\A_t)$ in place of $\mu(M_t)$,  considering the metaplectic representation  $\mu\, : Sp(d,\bR) \to  U(L^2(\rd))$.
Hence, the solution to the Cauchy problem \eqref{C12}, with $a^w=\mathcal{P}^w_{\mathbb{A}}$ and $\mathbb{A}$ in \eqref{matricesimp} can be written as
$$u=e^{ita^w}u_0=\mu(\mathcal{A}_t)u_0$$
with $\mathcal{A}_t=e^{t\mathbb{A}}\in Sp(d,\R)$. \par
The previous theory has further extensions. First, we note that the Schr\"{o}dinger representation $\rho$ of the Heisenberg group  $\H^d$
 and $\mu$ fit together to give rise to  the \emph{extended metaplectic representation} $\mu_e$ (see \cite{AEFK1, AEF10} and references therein). We briefly review its
 construction.
 \par
 The Heisenberg group $\H^d$ is the group obtained by defining on
 $\bR^{2d+1}$  the product$$
 (z,t)\cdot(z',t')=(z+z',t+t'+\frac{1}{2}\omega(z,z')),
 \quad z,z'\in\rdd,\,\,t,t'\in\bR$$
 where $\omega$ stands for the standard symplectic form in
 $\R^{2d}$ given in \eqref{symp}.
 The Schr\"odinger representation of the group $\H^d$
 on $\lrd$ is then defined by
 $$
 \rho(x,\xi,t)f(y)=e^{2\pi it}e^{-\pi i x \xi}
 e^{2\pi i \xi y}f(y-x) = e^{2\pi it}e^{-\pi i
 x \xi}  M_\xi T_x f(y)=e^{2\pi it}e^{-\pi i
  x \xi} \pi(z)f(y),
 $$
 where $z=(x,\xi)$.  The
 representations $\rho$ and $\mu$ can be combined and give rise to
 the  extended metaplectic representation $\mu_e$ of the group
 $G=\H^d\rtimes Sp(d,\bR)$, the semidirect product of $\H^d$ and
 $Sp(d,\R)$. The group law on $G$ is
 \begin{equation}\label{Glaw}
 \left((z,t),A\right)\cdot\left((z',t'),A'\right)
 =\left((z,t)\cdot(Az',t'),AA'\right)
 \end{equation}
 and the extended metaplectic   representation $\mu_e$ of $G$ is
 \begin{equation}\label{defex}
 \mu_e\left((z,t),A\right)=\rho(z,t)\circ\mu(A).
 \end{equation}
 The role of the center of the
 Heisenberg group is only a product by a phase factor, and if we omit it,  the ``true'' group
 under consideration is $\rdd\rtimes Sp(d,\bR)$, which we denote again
 by $G$. Thus $G$ acts naturally by affine transformations on phase
 space, namely
 \begin{equation}
 g\cdot(x,\xi)=\left((q,p),A\right)\cdot (x,\xi)= A {}^t (x,\xi)+{}^t(q,p).
 \label{affaction3}
 \end{equation}
 and its Lie algebra is isomorphic to  the Lie algebra $\mathcal{P}_2$ (of real-valued polynomials of degree $\leq 2$).
Indeed, the map which takes the infinitesimal (extended) metaplectic operator $i a^w$, with ($a\in \mathcal{P}_2$) to the hamiltonian vector field
$$\sum_{j=1}^d\left(\frac{\partial a}{\partial x_j}\frac{\partial }{\partial \xi_j} - \frac{\partial a}{\partial \xi_j}\frac{\partial }{\partial x_j}\right)$$
 is an isomorphism between the algebra of infinitesimal (extended) metaplectic operators and the Lie algebra spanned by the constant and linear symplectic vector fields.

 Another generalization is related to time-dependent symbols $a_t$.  Suppose we are given a continuous curve $i a_t^w$, $a_t\in  \mathcal{P}_2$, of infinitesimal (extended) metaplectic operators. Then, the theory  above, written for the case of a time-independent operator $i a^w$) still works and the Schr\"{o}dinger equation \eqref{C12} (with symbol $a$ replaced by $a_t$) has solution than can be  uniquely represented (up to a phase factor) by the extended metaplectic operator
 $$u=e^{ita_t^w}u_0=\mu_e(g_t)u_0$$
  where $g_t\in G$ is a continuous curve on $G$.
 This more general case was studied by Weinstein \cite[Section 3]{Wein85} and  is also object of our further investigations.

\section{Perturbed Schr\"odinger Equations}
We now consider the Cauchy problem in \eqref{C1}, where the  hamiltonian  $a^w$,  Weyl quantization of a real-valued homogeneous  quadratic polynomial as discussed in the previous section, is perturbed by adding a pseudodifferential operator $\sigma^w$  with a symbol $\sigma\in M^{\infty}_{1\otimes v_s}(\rdd)$, $s>2d$. Results concerning the  quantization $a^w$  of a more general real-valued polynomial of degree $\leq 2$ and a potential $V(t,x)$ which is only a multiplication operator are also mentioned in the end of this section.
Our main result is as follows.
\begin{theorem}\label{teofinal}
Consider the Cauchy problem \eqref{C1} with $H = a^w + \sigma^w$ and $a^w$ and $\sigma^w$ as above. Then,\\
 (i) The evolution operator $e^{it H}$ is a generalized metaplectic operator for every $t\in\bR$.
Specifically, we have
\begin{equation}\label{evolution}
e^{itH} = \mu(\mathcal{A}_t)b^w_{1,t}=b^w_{2,t}\mu(\mathcal{A}_t),\quad t\in\bR
\end{equation}
for some symbols $b_{1,t},b_{2,t}\in M^{\infty}_{1\otimes v_s}(\rdd)$ and where $\mu(\mathcal{A}_t)=e^{i t a^w}$ is the solution to \eqref{C12Mt}.\\
(ii) Consider $m\in\cM_{v_s}$,  $1\leq p\leq \infty$. Then $e^{itH}$ extends to a bounded operator from $M^p_{m\circ\A_t}(\rd)$ to $M^p_{m}(\rd)$. In particular,  if $ u_0\in M^p_{v_s}$, then $u(t,\cdot )
= e^{itH}u_0\in  M^p_{v_s}$, for all $t\in\bR$.\end{theorem}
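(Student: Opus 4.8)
The plan is to reduce the perturbed problem to the algebra and inverse-closedness properties of the classes $FIO(\A,s)$ established in Theorem \ref{prod1}, together with the structure theorem (Theorem \ref{pseudomu}) identifying $FIO(\A,s)$ with compositions of a metaplectic operator and a pseudodifferential operator with symbol in $S^s_w=M^{\infty}_{1\otimes v_s}(\rdd)$. The key observation is that $e^{itH}$ should be compared with the unperturbed evolution $\mu(\A_t)=e^{ita^w}$; that is, I would study the ``interaction picture'' operator
\begin{equation*}
R(t)=\mu(\A_t)^{-1}e^{itH},
\end{equation*}
which formally satisfies $i\, dR/dt = \tilde\sigma(t)^w R(t)$ with $R(0)=\Id$, where $\tilde\sigma(t)^w=\mu(\A_t)^{-1}\sigma^w\mu(\A_t)$. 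By the symplectic covariance of the Weyl calculus, $\mu(\A_t)^{-1}\sigma^w\mu(\A_t)=(\sigma\circ\A_t)^w$, and by Lemma \ref{lkh1}(ii) the symbol $\sigma\circ\A_t$ again lies in $M^\infty_{1\otimes v_s}(\rdd)$, with a norm bound that is locally uniform in $t$ (since $t\mapsto\A_t$ and its inverse are continuous, hence bounded on compact time intervals, and $v_s$ is $\A$-invariant up to constants by \eqref{pesieq}).

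The next step is to solve the integral equation $R(t)=\Id - i\int_0^t(\sigma\circ\A_\tau)^w R(\tau)\,d\tau$ by the usual Dyson/Picard iteration $R(t)=\sum_{n\ge0}(-i)^n\int_{0\le\tau_1\le\dots\le\tau_n\le t}(\sigma\circ\A_{\tau_1})^w\cdots(\sigma\circ\A_{\tau_n})^w\,d\tau_n\cdots d\tau_1$, and to show this series converges in the right space. Here is where estimate \eqref{eq:kh3} is decisive: the $n$-fold product of Weyl operators with symbols in $M^\infty_{1\otimes v_s}$ has Gabor matrix bounded by $C_0C_1\cdots C_n\la w-z\ra^{-s}$ with $C_j\asymp\|\sigma\circ\A_{\tau_j}\|_{M^\infty_{1\otimes v_s}}\le C_T$ uniformly for $\tau_j\in[0,T]$. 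Integrating over the simplex of volume $t^n/n!$, the $n$-th term has Gabor matrix bounded by $C_0(C_TT)^n/n!\cdot\la w-z\ra^{-s}$, so the series of Gabor matrices converges absolutely and uniformly, and summing gives $|\la R(t)\pi(z)g,\pi(w)g\ra|\le C_0e^{C_TT}\la w-z\ra^{-s}$. By Proposition \ref{charpsdo}(ii) this means $R(t)\in FIO(\Id,s)$, i.e. $R(t)=b_{2,t}^w$ for a symbol $b_{2,t}\in M^\infty_{1\otimes v_s}(\rdd)$; consequently $e^{itH}=\mu(\A_t)b_{2,t}^w$. Applying the covariance identity once more (or Theorem \ref{pseudomu} directly) produces the other factorization $e^{itH}=b_{1,t}^w\mu(\A_t)$ with $b_{1,t}=b_{2,t}\circ\A_t^{-1}$, again in $M^\infty_{1\otimes v_s}$ by Lemma \ref{lkh1}(ii). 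This proves (i). Part (ii) is then immediate: by Theorem \ref{T31}, any $T\in FIO(\A_t,s)$ with $s>2d$ is bounded from $M^p_{m\circ\A_t}(\rd)$ to $M^p_m(\rd)$ for $1\le p\le\infty$; taking $m=v_s$ and using $v_s\circ\A_t\asymp v_s$ gives boundedness of $e^{itH}$ on $M^p_{v_s}$.

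The main obstacle is making the formal manipulations rigorous: namely, justifying that $e^{itH}$ (defined, say, via Stone's theorem once $H$ is shown essentially self-adjoint, $\sigma^w$ being bounded and symmetric on $L^2$ for real $\sigma$) genuinely solves the Volterra equation for $R(t)$ in a strong enough topology, and that term-by-term differentiation of the Dyson series is legitimate. The cleanest route is to define $R(t)$ directly by the convergent series, verify by the uniform Gabor-matrix bounds that it is strongly continuous with values in $\mathcal B(L^2)$ and satisfies the integral equation (hence, by uniqueness of solutions to the $L^2$-Volterra equation with bounded generator, coincides with $\mu(\A_t)^{-1}e^{itH}$), and only afterwards read off the modulation-space membership from the Gabor-matrix decay. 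One should also check that differentiability of $t\mapsto\A_t$ is not actually needed beyond continuity for the convergence argument, so that the time-dependent and non-smooth-in-$t$ variants mentioned after the theorem are covered by the same proof.
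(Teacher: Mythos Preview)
Your proposal is correct and follows essentially the same route as the paper: factor out the unperturbed evolution to form $R(t)=\mu(\A_t)^{-1}e^{itH}$, expand $R(t)$ by the Dyson--Phillips series with generator $B(t)=\mu(\A_t)^{-1}\sigma^w\mu(\A_t)=(\sigma\circ\A_t)^w$, control the Gabor matrix of the $n$-fold products via \eqref{eq:kh3} and Lemma~\ref{lkh1}(ii), sum using the simplex volume $t^n/n!$, invoke Proposition~\ref{charpsdo}(ii) to identify $R(t)$ as a Weyl operator with symbol in $M^\infty_{1\otimes v_s}$, and finish with Theorem~\ref{pseudomu} and Theorem~\ref{T31}. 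The only slip is notational: in the factorization $e^{itH}=\mu(\A_t)R(t)$ the symbol of $R(t)$ is $b_{1,t}$, not $b_{2,t}$, according to the labeling in \eqref{evolution}.
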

\begin{proof}
The pattern of the proof follows  \cite[Theorem 4.1]{MetapWiener13}, where symbols in the classes $M^{\infty,1}_{1\otimes v_s}(\rdd)$ were considered. The main ingredient is the theory about bounded perturbation of operator (semi)groups (see, e.g. the textbooks~\cite{RS75} and \cite{EN06}) and is quite standard. Indeed, this is the model already employed in some of the papers that inspired our work, namely  Zelditch  \cite{zelditch} and Weinstein \cite{Wein85}.
In what follows we shall explain the main ideas of this proof. \par

Since $\sigma \in  M^{\infty}_{1\otimes s} (\rdd ) \subset  M^{\infty,1} (\rdd ) $, for $s>2d$,  the Weyl operator  $\sigma ^w$ is bounded on $L^2(\rd)$ (see \cite{wiener31} and Theorem \ref{fund} before) and more generally on every modulation space $M^p_m(\rd )$ with weight $m$ being $v_s$-moderate (see \cite{charly06} and Theorem \ref{fund} before). Using the boundedness result for classical metaplectic operators in \eqref{contmua}
we observe that the solution of the unperturbed problem $\mu(\A_t)=e^{i t a^w}$ is a one-parameter group strongly continuous on $L^2(\rd)$ and  on every $M^p_m(\rd )$
with $m \asymp m\circ \cA $ for every $\cA \in Sp(d,\R )$.
Hence also  the evolution  $e^{i t H}$ generates a one-parameter group strongly continuous on  $M^p_m(\rd )$ as above, thanks to the standard theory (see \cite[Ch.~3, Cor.~1.7]{EN06}). This immediately gives the boundedness $\|e^{itH}u_0\|_{M^p_{v_s}}\leq C \|u_0\|_{M^p_{v_s}}$, for all $s,t\in\bR$, that is
the second part of Theorem \ref{teofinal}, $(ii)$; whereas the continuity for a more general weight $m\in\cM_{v_s}$ follows by Theorem \ref{T31}, once we have proved that $e^{i t H}$ is a generalized metaplectic operator. We focus on this claim.
The two main ingredients that give the result are represented by
the algebra property of generalized metaplectic operators contained in Theorem \ref{prod1}, $(i)$,  and the characterization of pseudodifferential operators in Proposition \ref{charpsdo}, $(ii)$. Namely, if $e^{itH}=\mu(\A_t) P(t)$, we need to show that for every $t\in\bR$ the operator $P(t)$ is a pseudodifferential operator with symbol in $M^{\infty}_{1\otimes v_s}(\rdd)$. Writing $$B(t)=\mu(\A_{-t})\sigma^w\mu(\A_t)\in FIO(\A_{-t}\circ Id\circ \A_t,s )=FIO(Id,s ),$$ (hence $B(t)$ is a pseudodifferential operator with symbol in $M^\infty_{1\otimes v_s}$) the operator $P(t)$
can be written using the following \emph{Dyson-Phillips expansion}
\begin{equation}
  \label{eq:kh8}
  P(t) = \mathrm{Id} + \sum _{n=1}^\infty (-i)^n \int _0^t \int
  _0^{t_1} \dots \int
  _0^{t_{n-1}} B(t_1) B(t_2) \dots B(t_n) \, dt_1 \dots dt_n := \sum
  _{n=0}^\infty P_n(t) \, .
\end{equation}
We study the Gabor matrix of $P(t)$, working on a kernel level, as done in the pioneering work of Zelditch  \cite{zelditch} whereas we notice that Weinstein \cite{Wein85} uses the Weyl calculus of pseudodifferential operators and studies a similar  Dyson expansion on a symbol level.

First, using \eqref{eq:kh3} we have
$$
|\langle \prod _{j=1}^n B(t_j) \pi (z) g, \pi (w) g\rangle | \leq
C C_{t_1}C_{t_2}\dots  C_{t_n} \la w-z\ra^{-s} \,
$$
with $C_{t_j}\asymp \|\sigma \circ \A _{t_j}
\inv \|_{M^\infty_{1\otimes v_s}}$, $j=1,\dots n$.
Using Lemma~\ref{lkh1}
\begin{align*}
  \sup _{0\leq r \leq t}  \|\sigma \circ \A _{r}
  \inv \|_{M^\infty_{1\otimes v_s}} & \leq
  \|\sigma \|_{M^\infty_{1\otimes v_s} } \sup _{0\leq r \leq t} \|\A _r \|^s \,  \| V_{\Phi \circ
  \A _r} \Phi \|_{L^1_{v_s}} \leq M(t) \|\sigma \|_{M^\infty_{1\otimes v_s}},
\end{align*}
where $M(t) = \sup _{0\leq r \leq t} \|\A _r \|^s \,  \| V_{\Phi \circ
  \A _r }\Phi \|_{L^1_{v_s}}$ is easily proved to be finite.
The Gabor matrix of $P_n(t)$ can then by controlled by
$$|\langle P_n(t) \pi (z)g, \pi (w)g\rangle |\leq C\frac{t^n}{n!}M(t)^n\|\sigma \|^n_{M^\infty_{1\otimes v_s}} \la w-z\ra^{-s},
$$
and, consequently, the Gabor matrix of $P(t)$ satisfies
\begin{align*}
|\langle P(t) \pi (z) g , \pi (w)g\rangle | & \leq   \sum
_{n=0}^\infty |\langle P_n(t) \pi (z) g , \pi (w)g\rangle |  \la w-z\ra^{-s}\\
 & \leq   C\sum _{n=0}^\infty
   \frac{t^nM(t)^n \|\sigma \|^n_{M^\infty_{1\otimes v_s}}}{n!} \la w-z\ra^{-s}\\
   &=C(t)\la z-w\ra^{-s},
   \end{align*}
for a new function $C(t)>0$.
This gives by  Proposition~\ref{charpsdo}, $(ii)$, that $P(t) = b_{1,t} ^w$ for a symbol $b_{1,t}$   in $M^\infty_{1\otimes v_s}(\rdd)$. Finally, the characterization of generalized metaplectic operators in Theorem \ref{pseudomu} gives also the second equality in \eqref{evolution}:
$$
e^{itH} =  b_{2,t}^w \mu (\A _t)
$$
for some $b_{2,t}\in M^\infty_{1\otimes v_s}(\rdd)$, and this ends the proof.
\end{proof}

We now compare these issues with other results in the literature. First, in the frame-work of time-frequency analysis, we  mention the pioneering work \cite{Benyi} on boundedness on modulation spaces for Fourier multipliers: as special example we find the free particle evolution operator $e^{i t \Delta}$.
The study of PDEs and in particular of nonlinear Schr\"{o}dinger equations in modulation spaces has been widely developed by B. Wang and collaborators in many papers, see e.g., \cite{RSW,baoxiang,wh} and the recent textbook \cite{Wangbook}.  Inspired by these new nonlinear topics we would like to use the previous techniques for a Cauchy problem \eqref{C1} where the operator $H$ contains a nonlinearity, this is our future project.

Estimates on modulation spaces for Schr\"{o}dinger evolution operators $e^{itH}$ were also performed in the works \cite{kki1,kki2,kki3,kki4}, as detailed in the Introduction. \par
A  result similar to Theorem \ref{teofinal}, were  the linear affine transformation $\A_t$ is replaced by a more general symplectomorphism $\chi_t$ and the solution $e^{it H}$ is no more a generalized metaplectic operator but a Fourier integral operator in the classes $FIO(\chi_t,s)$, is provided in \cite[Theorem 4.1]{wavefrontsetshubin13}). There the Hamiltonian  $a^w(x,D)$ is a pseudodifferential operator
 where the symbol $a(z)$,
$z=(x,\xi)$, is real-valued positively homogeneous of degree 2,
i.e.\ $a(\lambda z)=\lambda^2 a(z)$ for $\lambda>0$, with
$a\in\cC^\infty (\rdd\setminus{0})$. Indeed, the
singularity at the origin of $a(z)$ can be admitted as well, by
absorbing it in a non-smooth potential. In this way, the pseudodifferential
operator $a^w(x,D)$ can be modified such that its symbol is in the Shubin classes \cite{Shubin91} (see also \cite{helffer84}) and the symbolic calculus can be applied. 

Finally, we spend some additional words about two papers that we often mentioned in the preceding pages, namely the work of Zelditch  \cite{zelditch} and its generalization by Weinstein \cite{Wein85}. The former studies propagation of singularities for evolution operators $e^{i t H}$ where $H=\Delta-V$ and $V$ is a multiplication by a potential function $V(x)$ which differs only slightly from a positive-definite quadratic function (so $H$ is very similar to the harmonic oscillator). A local representation of the propagator $e^{i t H}$ using metaplectic operators is obtained  and the main result says that the singularities of the solution $u(t,\cdot)=e^{i t H}u_0$ behave as in  the case of the harmonic oscillator. The latter paper generalizes the former one by considering  $H=a_t^w+V_t$, where the time-dependent hamiltonian $a_t^w$  is the  Weyl quantization of a real-valued polynomial $a(x,\xi)$ of degree $\leq 2$ and the time-dependent potential $V_t(x)$ is a smooth function in $(t,x)\in \bR\times\bR^d$ which belongs to particular subclasses of $S^0_{0,0}$. A local representation of $e^{i t H}$ as product of a metaplectic and a  pseudodifferential operator having symbol in these subclasses is obtained and propagation of singularities are  studied.

\section{Propagation of singularities}
To have a complete understanding of the subject, we present related results concerning the propagation of singularities, obtained in \cite{{wavefrontsetshubin13}} . There,  a new definition of wave front set is given,  extending the so-called global wave front set introduced by H\"ormander in   1991 \cite{hormanderglobalwfs91} and which is different from the classical one, already studied in \cite[Chap. 8]{hormander3}, which suits well in the  study of certain classes of  evolution operators of hyperbolic type.\par
First, we recall the classical H\"ormander wave front set in \cite[Chap. 8]{hormander3}.  Given $x_0\in\rd$, we define by $\f_{x_0}$ a test function in $\cC_0^\infty(\rd)$ such that $0\leq \f_{x_0}(x)\leq 1$ for every $x\in\rd$ and $\f_{x_0}(x)=1$ for $x$ in a neighborhood of $x_0$.  Given $\xi_0\in\rd\setminus\{0\}$ we define by $\psi_{\xi_0}$ a function in $\cC^\infty(\rd)$, supported in a conic open set $\Gamma\subset\rd\setminus\{0\}$ containing $\xi_0$, such that $\psi_{\xi_0}(\xi)=1$ for $\xi \in \Gamma'$, $|\xi|\geq A$ for a conic open set $\Gamma'$ such that $\xi_0\in\Gamma'\subset\Gamma$ and $\psi_{\xi_0}(\xi)=0$ for $|\xi |<R$ for some $0<R<A$. We define  the classical H\"ormander wave front set $W F_\psi (u)$ of a distribution $u\in\cS'(\rd)$ (or $u\in\mathcal{D}'(\rd)$), as follows: for $(x_0,\xi_0)\in\rd\times\rd\setminus\{0\}$,  $(x_0,\xi_0)\notin W F_\psi (u)$ if there exist $\f_{x_0}$ and $\psi_{\xi_0}$ such that
$\psi_{\xi_0}(D)(\f_{x_0} u)\in \cS(\rd)$. Here $\psi_{\xi_0}(D)$ is the Fourier multiplier with symbol $\psi_{\xi_0}$, that is
$$ \psi_{\xi_0}(D)(\f_{x_0} u)(x)= \intrd e^{2\pi i x \xi} \psi_{\xi_0}(\xi) \widehat{\f_{x_0} u}(\xi)\,d\xi.
$$
Hence the functions $\f_{x_0}$ and $\psi_{\xi_0}$  represent the cut-off in time and in frequency, respectively. For hyperbolic equations of the type
$$i\frac{\partial u} {\partial t} + a^w(t, x, D)u = 0$$
with $u(0,x)=u_0$, and where  $a(t, x, \xi)$ is a real-valued hamiltonian, homogeneous of the first order in $\xi$, we have for the solution $u(t,x)$:
$$WF_\psi(u(t)) =\chi_t (WF_\psi(u_0))$$
where $\chi_t$ is the symplectomorphism defined by the Hamiltonian $a(t, x, \xi)$.  We want a similar result for the evolution of the Schr\"{o}dinger equation \eqref{C1}. To reach this goal, we introduce a wave front set that does the job. This is a generalization of the global H\"ormander  wave front set $WF_G(f)$, that we recall in what follows. Namely,
we use an equivalent definition  via STFT introduced (and proved to be equivalent) in \cite{RWwavefrontset}.
 Consider $u\in\cS'(\rd)$, $z_0\in\rdd\setminus \{0\}$ and fix $g\in\cS(\rd)\setminus\{0\}$. Then  $z_0\notin WF_G (u)$ if there exists an
open conic set $\Gamma_{z_0}\subset \rdd$ containing $z_0$ such
that for every $r>0$
\begin{equation}\label{WFSeq}
|V_g u(z)|\leq C_r \la z\ra^{-r},\quad z\in \Gamma_{z_0}
\end{equation}
for a suitable $C_r>0$.
Then $WF_G (u)$ is a conic closed subset of $\rdd\setminus \{0\}$ and its definition does not depend on the choice of the nonzero window function $g$ in $\cS(\rd)$.
Finally, we define the  Gabor wave front set $\wpr(u)$ under our consideration as follows \cite{wavefrontsetshubin13}.
\begin{definition}\label{defWFgr}
Let  $g\in\cS(\rd)$, $g\not=0$, $r>0$. For $u\in M^p_{v_{-r}}(\rd)$, $z_0\in \rdd$, $z_0\not=0$, we say that $z_0\notin \wpr (u)$ if there exists an open conic neighborhood $\Gamma_{z_0}\subset \rdd$ containing $z_0$ such that for a suitable constant $C>0$
\begin{equation}\label{5.1}
\int_{\Gamma_{z_0}}|V_gu(z)|^p \la z\ra^{pr}\,dz<\infty
\end{equation}
(with obvious changes for $p=\infty$).
\end{definition}
Then $\wpr (u)$ is well-defined as conic closed subset of $\rdd\setminus\{0\}$. Furthermore, the definitions of  $\wpr (f)$  does not depend on the choice of the window $g$. We have the following characterizations:
 $$u\in M^p_{v_r}(\rd) \Leftrightarrow \wpr (u)=\emptyset$$
 and, similarly,
 $$u\in\cS(\rd) \Leftrightarrow WF_G (f)=\emptyset.$$
The propagation of singularities for the evolution $e^{i t H}$ of our equation \eqref{C1}  are proved in \cite{wavefrontsetshubin13} and summarized as follows.
\begin{theorem}\label{T1.5}
Consider $\sigma\in M^{\infty}_{1\otimes v_s}$, $s>2d$, $1\leq p\leq\infty$. Then
\begin{equation}\label{T1.5eq1}
e^{i t H}: M^p_{v_r}(\rd)\to M^p_{v_r} (\rd)
\end{equation}
continuously, for $|r|< s-2d$. Moreover,  for $u_0\in M^p_{v_{-r}}(\rd)$,
\begin{equation}\label{T1.5eq2}
WF^{p,r}_G(e^{i t H} u_0)=\A_t(WF^{p,r}_G (u_0)),
\end{equation}
provided $0<2r<s-2d$.
\end{theorem}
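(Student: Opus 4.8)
The plan is to derive both parts from Theorem~\ref{teofinal}, which identifies $e^{itH}$ as a generalized metaplectic operator, $e^{itH}\in FIO(\A_t,s)$ with $e^{itH}=\mu(\A_t)b_{1,t}^w=b_{2,t}^w\mu(\A_t)$ and $b_{i,t}\in M^{\infty}_{1\otimes v_s}(\rdd)$, together with the sharp Gabor‑matrix decay \eqref{asterisco}. For part~(i) I would repeat the argument leading to \eqref{contmua}: fix $g\in\cS(\rd)\setminus\{0\}$ with $\|g\|_2=1$, put $k(w,z)=\langle e^{itH}\pi(z)g,\pi(w)g\rangle$ so that $|k(w,z)|\le C\langle w-\A_t z\rangle^{-s}$ and $V_g(e^{itH}u_0)(w)=\intrdd k(w,z)V_gu_0(z)\,dz$; then the Peetre inequality $\langle w\rangle^{r}\le\langle w-\A_t z\rangle^{|r|}\langle\A_t z\rangle^{r}$ together with $\langle\A_t z\rangle\asymp\langle z\rangle$ (since $\A_t$ is an invertible linear map, cf.\ \eqref{pesieq}) yields $|V_g(e^{itH}u_0)(w)|\langle w\rangle^{r}\le C\intrdd\langle w-\A_t z\rangle^{|r|-s}\langle z\rangle^{r}|V_gu_0(z)|\,dz$; after the substitution $z\mapsto\A_t z$ the right‑hand side is the convolution of the kernel $\langle\cdot\rangle^{|r|-s}$ — which lies in $L^1(\rdd)$ precisely because $|r|<s-2d$ — with an $L^p$ function of norm $\asymp\|u_0\|_{M^p_{v_r}}$, so Young's inequality gives \eqref{T1.5eq1}. (Alternatively one may invoke Theorem~\ref{T31} with $m=v_r$, using $v_r\circ\A_t\asymp v_r$.) Running the same estimate with $-r$ in place of $r$ shows that $e^{itH}$ also maps $M^p_{v_{-r}}(\rd)$ into itself, so in part~(ii) both sides of \eqref{T1.5eq2} are well defined for $u_0\in M^p_{v_{-r}}(\rd)$.

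For part~(ii) the core is the following statement, which I would prove once and apply twice: if $T\in FIO(\A,s)$ with $\A\in Sp(d,\bR)$ and $0<2r<s-2d$, then $WF^{p,r}_G(Tv)\subseteq\A\,(WF^{p,r}_G(v))$ for every $v\in M^p_{v_{-r}}(\rd)$. Granting this, apply it with $T=e^{itH}$, $\A=\A_t$, and then with $T=e^{-itH}=(e^{itH})^{-1}\in FIO(\A_t^{-1},s)$ (by Theorem~\ref{prod1}(ii)), $\A=\A_t^{-1}$, and $v=e^{itH}u_0$: the first inclusion gives ``$\subseteq$'' in \eqref{T1.5eq2}, the second gives $WF^{p,r}_G(u_0)\subseteq\A_t^{-1}(WF^{p,r}_G(e^{itH}u_0))$, i.e.\ ``$\supseteq$''; hence equality.

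To prove the inclusion, fix $z_0\neq0$ with $\A^{-1}z_0\notin WF^{p,r}_G(v)$, and choose an open cone $\Gamma$ containing $w_0:=\A^{-1}z_0$ with $\int_\Gamma|V_gv(z)|^p\langle z\rangle^{pr}\,dz<\infty$ (Definition~\ref{defWFgr}). Since $\A$ is linear and invertible it carries open cones to open cones, so we may pick an open cone $\Gamma'\ni z_0$ with $\overline{\Gamma'}\setminus\{0\}\subset\A\Gamma$. Decompose $V_gv=F_1+F_2$ with $F_1=V_gv\,\mathbf 1_{\Gamma}$, $F_2=V_gv\,\mathbf 1_{\rdd\setminus\Gamma}$, and accordingly $V_g(Tv)=G_1+G_2$ with $G_j(w)=\intrdd k(w,z)F_j(z)\,dz$; it suffices to show $\int_{\Gamma'}|G_j(w)|^p\langle w\rangle^{pr}\,dw<\infty$ for $j=1,2$. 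The term $G_1$ is handled exactly as in part~(i): $F_1\in L^p_{v_r}(\rdd)$ by the choice of $\Gamma$, and the same Peetre/Young estimate (valid since $r<s-2d$) gives $\|G_1\,v_r\|_{L^p(\rdd)}\lesssim\|F_1\|_{L^p_{v_r}}<\infty$. For $G_2$, if $w\in\Gamma'$ and $z\notin\Gamma$ then $w\in\overline{\Gamma'}$ and $\A z\in(\A\Gamma)^c$, two closed cones meeting only at the origin; hence there is $\delta>0$ with $|w-\A z|\ge\delta(|w|+|\A z|)\ge\delta'(|w|+|z|)$, so $\langle w-\A z\rangle^{-s}\le C\langle w\rangle^{-s_1}\langle z\rangle^{-s_2}$ for every splitting $s_1+s_2=s$, $s_1,s_2\ge0$. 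Writing $|F_2(z)|=\langle z\rangle^{r}h(z)$ with $h=\langle\cdot\rangle^{-r}|F_2|\in L^p(\rdd)$ (because $v\in M^p_{v_{-r}}$) and applying Hölder's inequality gives $\int_{\Gamma'}|G_2(w)|^p\langle w\rangle^{pr}\,dw\le C\|h\|_{L^p}^p\big(\int_{\rdd}\langle z\rangle^{(r-s_2)p'}\,dz\big)^{p/p'}\int_{\Gamma'}\langle w\rangle^{(r-s_1)p}\,dw$, which is finite as soon as $s_1-r>2d/p$ and $s_2-r>2d/p'$; such a splitting of $s$ exists iff $s>2r+2d$, which is exactly the hypothesis $0<2r<s-2d$ (and also covers the condition $r<s-2d$ used for $G_1$). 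Adding the two estimates yields $z_0\notin WF^{p,r}_G(Tv)$.

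An alternative, more transparent route — which I would also indicate — uses the factorization $e^{itH}=b_{2,t}^w\mu(\A_t)$ of Theorem~\ref{pseudomu}: the super‑polynomial concentration \eqref{eq:kh1} of the Gabor matrix of $\mu(\A_t)$ along the graph of $\A_t$ gives $WF^{p,r}_G(\mu(\A_t)v)=\A_t(WF^{p,r}_G(v))$ by the same cone‑splitting argument (now with no weight constraint), while a pseudodifferential operator with symbol in $M^{\infty}_{1\otimes v_s}$, $s>2d$, preserves $WF^{p,r}_G$ — the inclusion $WF^{p,r}_G(b_{2,t}^wv)\subseteq WF^{p,r}_G(v)$ coming from \eqref{eq:kh9s} (the $\A=\mathrm{Id}$ case), the reverse inclusion from the Wiener property Theorem~\ref{fund}(iii) applied to the $L^2$‑invertible operator $b_{2,t}^w=e^{itH}\mu(\A_t)^{-1}$. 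I expect the only real work to sit in the term $G_2$: arranging the nested cones so that $\A$ moves them compatibly, verifying the elementary conic‑separation inequality $|w-\A z|\gtrsim|w|+|z|$, and tracking the two independent decay budgets so as to see that $0<2r<s-2d$ is exactly the range making the argument close; part~(i), the reduction to a single inclusion, and the treatment of $G_1$ are routine.
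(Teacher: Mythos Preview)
Your argument is correct. Note, however, that in this paper Theorem~\ref{T1.5} is not proved but only quoted from \cite{wavefrontsetshubin13}, so there is no in-text proof to compare against; what the paper does provide are exactly the ingredients you use --- Theorem~\ref{teofinal} for $e^{itH}\in FIO(\A_t,s)$, the Gabor-matrix decay \eqref{asterisco}, Theorem~\ref{prod1}(ii) for $e^{-itH}\in FIO(\A_t^{-1},s)$, and Proposition~\ref{5.4} for the pseudodifferential inclusion. Your ``alternative route'' via the factorization $e^{itH}=b_{2,t}^w\mu(\A_t)$ together with Proposition~\ref{5.4} and Theorem~\ref{fund}(iii) is in fact the way the paper organizes the material, so that path is closest in spirit to the presentation here; your direct proof via \eqref{asterisco} and the conic-separation estimate is the unrolled version of the same idea and correctly identifies $0<2r<s-2d$ as the threshold at which the two decay budgets $s_1>r+2d/p$, $s_2>r+2d/p'$ can be allocated.

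Two small remarks. First, when you invoke Theorem~\ref{T31} as an alternative for part~(i), be aware that as stated it requires $m\in\cM_{v_s}$, which for $m=v_r$ only says $|r|\le s$; the sharper range $|r|<s-2d$ comes from your direct Peetre--Young estimate, not from Theorem~\ref{T31} verbatim. Second, in the conic-separation step it is worth making explicit that the inequality $|w-\A z|\ge\delta(|w|+|\A z|)$ follows by homogeneity from the fact that $\overline{\Gamma'}\cap S^{2d-1}$ and $(\A\Gamma)^c\cap S^{2d-1}$ are disjoint \emph{compact} subsets of the sphere (this is where the choice $\overline{\Gamma'}\setminus\{0\}\subset\A\Gamma$ is used); your phrasing ``two closed cones meeting only at the origin'' is correct but the compactness on the sphere is what actually produces the positive $\delta$.
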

Observe the more restrictive assumption  on $r$ for \eqref{T1.5eq2}, with respect to that for
\eqref{T1.5eq1}.

Using \eqref{inteSs} and \eqref{T1.5eq2}, we may recapture the known results for the propagation in the case of a smooth potentials, i.e. $\sigma\in S^0_{0,0}$.
Then the estimate \eqref{asterisco} is satisfied for every $s$ and from Theorem \ref{T1.5} we recapture for $u_0\in\cS'(\rd)$
\begin{equation}\label{WT1.3}
WF_G(e^{i t H} u_0)=\A_t(WF_G (u_0)).
\end{equation}
This identity is contained in many preceding results. Although it is impossible to do justice to the vast literature in this
connection, let us mention the pioneering work of H\"{o}rmander \cite{hormanderglobalwfs91} 1991, who defined the  wave front set in \eqref{WFSeq} as well as its analytic version,  and proved  \eqref{WT1.3} in the case of the metaplectic operators.
For subsequent results providing \eqref{WT1.3} and its analytic-Gevrey version for general smooth symbols, let us refer to \cite{Hassel-Wunsch,ito,ito-nakamura,Martinez,Martinez2,Mizuhara,Nakamura,Nakamura2,wunsch}. The wave front sets introduced there under different names actually coincide with those of  H\"{o}rmander 1991, cf. \cite{RWwavefrontset}, \cite{sw} and  \cite{Cappiello-shulz}.
Finally, for sake of completeness, let us recall the propagation of singularities for a pseudodifferential operators in the framework of the global and the Gabor wave front set. Observe the difference in the symbol classes and in the domain of the distribution $u$.
\begin{proposition}\label{5.4}
Let $\sigma \in  M^\infty_{1\otimes v_s}(\rdd)$, $s>2d$ and $0<2r<s-2d$. Then, for every $u\in M^p_{v_{-r}}(\rd)$ we have
\begin{equation}\label{5.10eq}
\wpr \,(\sigma(x,D)u)\subset \wpr (u).
\end{equation}
If $\sigma\in S^0_{0,0}$, then for every $u\in\cS'(\rd)$,
\begin{equation}\label{5.11eq}
WF_G  \,(\sigma(x,D)u)\subset WF_G(u).
\end{equation}
 \end{proposition}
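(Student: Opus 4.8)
The plan is to reduce both statements to the off-diagonal decay of the Gabor matrix of $T=\sigma(x,D)$, combined with Young's and Peetre's inequalities. First I would fix $g\in\cS(\rd)$ with $\|g\|_2=1$ (recalling that neither $\wpr$ nor $WF_G$ depends on this choice) and set $k(w,z)=\langle T\pi(z)g,\pi(w)g\rangle$. For part (i), $\sigma\in M^\infty_{1\otimes v_s}(\rdd)$ yields, by Proposition~\ref{charpsdo}(ii), $|k(w,z)|\le C\langle w-z\rangle^{-s}$; since $v_{-r}$ is $v_s$-moderate for $0<r<s$, Theorem~\ref{fund}(i) makes $T$ bounded on $M^p_{v_{-r}}(\rd)$, so the STFT-kernel identity \eqref{KGbM} passes from $\cS(\rd)$ to $M^p_{v_{-r}}(\rd)$ via the inversion formula \eqref{invformula}, giving the pointwise bound $|V_g(Tu)(w)|\le C\intrdd\langle w-z\rangle^{-s}|V_gu(z)|\,dz$ for $u\in M^p_{v_{-r}}(\rd)$.

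To prove \eqref{5.10eq} I would fix $z_0\notin\wpr(u)$ with a witnessing open conic neighbourhood $\Gamma_{z_0}$ and pass to a strictly smaller open cone $\Gamma'_{z_0}$ with $\overline{\Gamma'_{z_0}}\setminus\{0\}\subset\Gamma_{z_0}$; for $w\in\Gamma'_{z_0}$ I split $V_g(Tu)(w)=A(w)+B(w)$, the integral over $\Gamma_{z_0}$ and over its complement. For $A$, Peetre's inequality $\langle w\rangle^r\le C\langle w-z\rangle^r\langle z\rangle^r$ turns $\langle w\rangle^rA(w)$ into the convolution $\langle\cdot\rangle^{r-s}\ast(\langle\cdot\rangle^r\chi_{\Gamma_{z_0}}|V_gu|)$, and Young's inequality gives $\|\langle\cdot\rangle^rA\|_{L^p}\le C\|\langle\cdot\rangle^{r-s}\|_{L^1}\|\langle\cdot\rangle^r\chi_{\Gamma_{z_0}}V_gu\|_{L^p}<\infty$, the first factor being finite since $s-r>2d$ (from $2r<s-2d$, $r>0$) and the second by the assumption $z_0\notin\wpr(u)$, i.e.\ \eqref{5.1} for $\Gamma_{z_0}$. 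For $B$ the key is the elementary cone estimate: since $\overline{\Gamma'_{z_0}}\setminus\{0\}\subset\Gamma_{z_0}$ there is $\delta>0$ with $|w-z|\ge\delta\max(|w|,|z|)$ whenever $w\in\Gamma'_{z_0}$, $z\notin\Gamma_{z_0}$, hence $\langle w-z\rangle\ge c\langle w\rangle$ and $\langle w-z\rangle\ge c\langle z\rangle$ on the domain of $B$; writing $\langle w-z\rangle^{-s}\le c^{-2r}\langle w\rangle^{-r}\langle z\rangle^{-r}\langle w-z\rangle^{-(s-2r)}$ I reduce $\langle w\rangle^rB(w)$ to the convolution $\langle\cdot\rangle^{-(s-2r)}\ast(\langle\cdot\rangle^{-r}|V_gu|)$ and Young's inequality gives $\|\langle\cdot\rangle^rB\|_{L^p(\Gamma'_{z_0})}\le C\|\langle\cdot\rangle^{-(s-2r)}\|_{L^1}\|u\|_{M^p_{v_{-r}}}<\infty$, where $\langle\cdot\rangle^{-(s-2r)}\in L^1(\rdd)$ is exactly the hypothesis $0<2r<s-2d$. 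This yields $\int_{\Gamma'_{z_0}}|V_g(Tu)(w)|^p\langle w\rangle^{pr}\,dw<\infty$, i.e.\ $z_0\notin\wpr(Tu)$.

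For part (ii) I would use that $S^0_{0,0}=\bigcap_{s\ge0}M^\infty_{1\otimes v_s}(\rdd)$ by \eqref{inteSs} and that every $u\in\cS'(\rd)$ lies in $M^p_{v_{-r_0}}(\rd)$ for some $r_0>0$, so $|k(w,z)|\le C_s\langle w-z\rangle^{-s}$ for every $s\ge0$. Given $z_0\notin WF_G(u)$ with cone $\Gamma_{z_0}$, fix a subcone $\Gamma'_{z_0}$ with $\overline{\Gamma'_{z_0}}\setminus\{0\}\subset\Gamma_{z_0}$, independent of what follows; for arbitrary $r>0$ choose $s>r+r_0+2d+1$ and split $V_g(Tu)=A+B$ as above. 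On $\Gamma_{z_0}$ I insert $|V_gu(z)|\le C_{r'}\langle z\rangle^{-r'}$ with $r'\ge r$ and combine Peetre's inequality with $\langle\cdot\rangle^{r-s}\in L^1$ and $\langle\cdot\rangle^{r-r'}\in L^\infty$ to get $|A(w)|\le C_r\langle w\rangle^{-r}$; on the complement the same cone geometry gives $\langle w-z\rangle^{-s}\le c^{-s}\langle w\rangle^{-(s-\beta)}\langle z\rangle^{-\beta}$ with $\beta=r_0+2d+1$, and H\"older's inequality together with $u\in M^p_{v_{-r_0}}$ yields $|B(w)|\le C_r\langle w\rangle^{-(s-\beta)}\le C_r\langle w\rangle^{-r}$. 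Since $r>0$ is arbitrary and $\Gamma'_{z_0}$ is fixed, $|V_g(Tu)(w)|\le C_r\langle w\rangle^{-r}$ on $\Gamma'_{z_0}$ for all $r$, that is $z_0\notin WF_G(Tu)$.

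The hard part will be the far-cone term $B$: there $V_gu$ is controlled on the complement of $\Gamma_{z_0}$ only in a global weighted sense, so the required decay in $w$ must come entirely from the kernel. This is handled by passing to the strictly smaller cone $\Gamma'_{z_0}$ and using $\langle w-z\rangle\gtrsim\langle w\rangle+\langle z\rangle$ there, and it is precisely this step that forces the sharp condition $2r<s-2d$ (so that $\langle\cdot\rangle^{-(s-2r)}\in L^1(\rdd)$ and Young's inequality closes); the rest is a routine matter of Peetre's and Young's/H\"older's inequalities and the boundedness results of Section~2.
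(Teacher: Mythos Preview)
The paper does not actually prove Proposition~\ref{5.4}: it is only recalled there ``for sake of completeness'' from \cite{wavefrontsetshubin13}, so there is no in-paper argument to compare against. Your proof is the standard one --- Gabor-matrix decay from Proposition~\ref{charpsdo}(ii), splitting the integral into a near-cone piece handled by Peetre~$+$~Young and a far-cone piece handled by the angular separation estimate $|w-z|\gtrsim |w|+|z|$ --- and it is correct; in particular your identification of $2r<s-2d$ as precisely the condition making $\langle\cdot\rangle^{-(s-2r)}\in L^1(\rdd)$ is the right reason the far-cone term closes.

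One small imprecision in part~(ii): the assertion ``every $u\in\cS'(\rd)$ lies in $M^p_{v_{-r_0}}(\rd)$ for some $r_0>0$'' is only guaranteed for $p=\infty$ (indeed $\cS'(\rd)=\bigcup_{N\ge0}M^\infty_{1/v_N}(\rd)$), so you should fix $p=\infty$ there --- which is anyway the relevant exponent for the definition \eqref{WFSeq} of $WF_G$. With that adjustment the far-cone bound becomes simply $\int_{\rdd}\langle z\rangle^{-\beta}|V_gu(z)|\,dz\le C\int_{\rdd}\langle z\rangle^{-\beta+r_0}\,dz<\infty$ for $\beta>r_0+2d$, and no H\"older is needed.
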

From the previous results it follows that the study of the propagation of singularities of the evolution $e^{i t H}$ should be conducted as follows: if  the perturbation $\sigma^w$ in \eqref{C1bis} is the quantization of a smooth potential $\sigma\in S^0_{0,0}$ we use the global wave front set $WF_G(u)$, otherwise, if the symbol is rough and $\sigma\in  M^\infty_{1\otimes v_s}(\rdd)$, for some $s>2d$, then we use the Gabor wave front set  $\wpr (u)$, with the limitation $0< 2 r< s-2d$.
 We end up this section with two examples, the first  is the harmonic oscillator and the second example is a perturbation of it with a rough potential. In order to compute the wave front set $WF_G(u)$ we need the following preliminary results.
  \begin{proposition}\label{5.5}
 We have:\\
 (i) Let $\xi_0$ be fixed in $\rd$. Then $$WF_G\, (e^{2\pi i  x\xi_0 })=\{z=(x,\xi), x\not=0, \xi=0\}$$ independently of $\xi_0$.\\
 (ii) Let $c\in\bR$, $c\not=0$, be fixed. Then
  $$WF_G\, (e^{\pi i c|x|^2 })=\{z=(x,\xi), \, x\not=0, \, \xi=c x\}.$$
  \end{proposition}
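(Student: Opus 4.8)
The plan is to compute the global wave front set directly from its STFT characterization \eqref{WFSeq}, using the Gaussian $g(x)=e^{-\pi|x|^2}$ as window, for which the STFT of a chirp or a pure frequency can be evaluated in closed form. For part (i), the function $f(x)=e^{2\pi i x\xi_0}=M_{\xi_0}1$ is (up to the modulation) the constant function, so $V_gf(z)=V_g(M_{\xi_0}1)(z)$. Using the covariance property $V_g(M_{\xi_0}h)(x,\xi)=e^{-2\pi i x(\xi-\xi_0)}V_gh(x,\xi-\xi_0)$, it suffices to understand $V_g1$. A direct Gaussian integral gives $V_g1(x,\xi)=c\,e^{-\pi|\xi|^2}$ for a nonzero constant $c$, so that $|V_gf(x,\xi)|=|c|\,e^{-\pi|\xi-\xi_0|^2}$. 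Thus $|V_gf(z)|$ decays (super-polynomially, indeed Gaussianly) along any ray $z=\lambda z_0$, $\lambda\to+\infty$, precisely when the $\xi$-component of $z_0$ is nonzero; if $z_0=(x,0)$ with $x\neq0$ then $|V_gf(\lambda x,0)|=|c|\,e^{-\pi|\xi_0|^2}$ is constant in $\lambda$ and hence \eqref{WFSeq} fails. This identifies $WF_G(f)=\{(x,\xi):x\neq0,\ \xi=0\}$, independently of $\xi_0$.

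For part (ii), the function $f(x)=e^{\pi i c|x|^2}$ is a pure chirp, and one again computes $V_gf$ against the Gaussian window by a (complex) Gaussian integral: up to a nonvanishing prefactor depending on $c$, one obtains $|V_gf(x,\xi)|=C(c)\,e^{-\frac{\pi}{1+c^2}|\xi-cx|^2}$. (The cleanest way is to note that $e^{\pi i c|x|^2}=\mu\big(\begin{pmatrix}I&0\\-c I&I\end{pmatrix}\big)^{\mp1}$ applied to $1$ up to sign via \eqref{lower}, so that $V_gf$ is, after the metaplectic covariance \eqref{metap} and the known action of this lower-triangular symplectic matrix on phase space, the STFT of the constant function composed with the shear $(x,\xi)\mapsto(x,\xi-cx)$; then invoke the formula for $V_g1$ from part (i).) Consequently $|V_gf(\lambda z_0)|$ decays rapidly as $\lambda\to+\infty$ iff the $\xi$-component of $z_0$ differs from $c$ times its $x$-component, and stays bounded below on the line $\xi=cx$. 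This yields $WF_G(f)=\{(x,\xi):x\neq0,\ \xi=cx\}$.

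In both cases one must also check the behaviour near the excluded point $z=0$: the wave front set lives in $\rdd\setminus\{0\}$, and for $z_0$ of the critical type (e.g. $(x,0)$ in part (i)) one exhibits that \emph{no} open conic neighborhood works, because any such cone contains a full ray on which $V_gf$ fails to decay; conversely, for noncritical $z_0$ one chooses the conic neighborhood small enough that $|\xi-\xi_0|$ (resp.\ $|\xi-cx|$) is bounded below by a positive multiple of $|z|$ on it, giving the required rapid decay in \eqref{WFSeq}. The only mildly delicate point — and the place I would be most careful — is the evaluation and, more importantly, the \emph{uniform} lower bound of the Gaussian exponent on a conic set: one needs that on a cone avoiding the critical direction, the quantity $|\xi-cx|^2$ grows like $|(x,\xi)|^2$, which follows from homogeneity and compactness of the cone's intersection with the unit sphere, but should be stated explicitly. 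The independence of the window $g\in\cS(\rd)\setminus\{0\}$ is automatic from the general fact recalled before Definition \ref{defWFgr}, so no separate argument is needed there.
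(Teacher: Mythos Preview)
The paper states Proposition~\ref{5.5} without proof, so there is no argument to compare against. Your approach---computing $|V_gf|$ explicitly with the Gaussian window and reading off the conic directions of non-decay---is the natural one and is correct. The key formulas you obtain, $|V_g(e^{2\pi i x\xi_0})(x,\xi)|=e^{-\pi|\xi-\xi_0|^2}$ and $|V_g(e^{\pi i c|x|^2})(x,\xi)|=(1+c^2)^{-d/4}e^{-\frac{\pi}{1+c^2}|\xi-cx|^2}$, are right (the second one follows from a complex Gaussian integral, completing the square with parameter $1-ic$), and the subsequent cone argument---using that $|\xi-cx|\geq\delta|z|$ on a sufficiently narrow cone avoiding $\{\xi=cx\}$ by homogeneity and compactness of the spherical section---is exactly the right way to handle uniformity. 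Two very minor remarks: the covariance identity you quote for $V_g(M_{\xi_0}h)$ carries a spurious phase factor (the correct relation is simply $V_g(M_{\xi_0}h)(x,\xi)=V_gh(x,\xi-\xi_0)$), which is harmless since you only use the modulus; and the alternative metaplectic route you sketch for part~(ii) requires some care with the sign of $C$ in~\eqref{lower} and the direction in which the symplectic matrix acts on $WF_G$, so the direct computation you give first is the safer choice.
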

  \textit{Example 1.} {\bf The harmonic oscillator}.\par
  Consider the Cauchy problem
  \begin{equation}\label{5.17eq}
  \begin{cases} i \partial_t
   u +\frac{1}{4\pi}\Delta u-\pi|x|^2 u=0\\
  u(0,x)=u_0(x).
  \end{cases}
  \end{equation}
  The solution is
  \begin{equation}\label{5.18eq}
  u(t,x)=(\cos t)^{-d/2}\intrd e^{2\pi i [\frac1{\cos t} x\xi - \frac{\tan t}2 (x^2+\xi^2)]}\hat{u_0}(\xi)\,d\xi,\quad t\not=\frac\pi 2 + k\pi,\,\,k\in\bZ.
  \end{equation}
  The Gabor matrix with Gaussian window $g(x)=e^{-\pi|x|^2}$ can be explicitly computed as
  \begin{equation}\label{5.19eq}
  |k(w,z)|=2^{-\frac d 2} e^{-\frac \pi 2 |z-\A_t(w)|^2},
  \end{equation}
  where
  \begin{equation}\label{5.20eq}
\A_t{}^t(y,\xi)=\begin{pmatrix}(\cos
t)I&(-\sin t)I\\(\sin t)I&(\cos
t)I\end{pmatrix} \begin{pmatrix}y\\\xi\end{pmatrix}
\end{equation}
with $I$ being the identity matrix.
  Observe that the expression \eqref{5.19eq} is meaningful for every $t\in\bR$. Let us address to \cite[Section 6.2]{fio3} for applications to numerical experiments. \par
  We may test \eqref{WT1.3} on the initial datum $u_0(x)=1$, giving for $t<\pi/2$,
  $$u(t,x)=(\cos t)^{-d/2} e^{-\pi i \tan t |x|^2}.
  $$
  From Proposition \ref{5.5}, $(i)$ and $(ii)$, we have coherently with \eqref{5.20eq}
  \begin{align*}WF_G \,(u(t,x))&=\{ (x,\xi),\,x=(\cos t)y, \,\xi=(\sin t) y, y\not=0\}\\
  &=\A_t(WF_G\,(1))=\A_t (\{(y,\eta),\, y\not=0,\eta=0\}).
  \end{align*}
  \textit{Example 2.}\label{ex4} {\bf Perturbed harmonic oscillator}.\par
Consider the perturbed harmonic oscillator in dimension $d=1$
\begin{equation}\label{5.31eq}
\begin{cases} i \frac{\partial u }{\partial t}
  +\frac{1}{4\pi}\frac{\partial^2 u}{\partial x^2}-\pi x^2 u+ |\sin x|^\mu u=0\\
u(0,x)=u_0(x)
\end{cases}
\end{equation}
with $\mu>1$, $x\in\R$.  So in this case $\sigma^w=|\sin x|^\mu$ is a multiplication operator.  By \cite[Corollary 2.4]{wavefrontsetshubin13} we have $\sigma\in M^\infty_{1\otimes v_{\mu+1}}(\R^2)$.
 From Theorem \ref{T1.5} we have that the Cauchy problem is well-posed for $u_0\in M^p_{v_{r}}(\R)$, $|r|<\mu-2$ and the propagation of $\wpr\, (u(t,\cdot))$ for $t\in \bR$ takes place as in Example $1$, for $0<r<\mu/2-1$.\par
 Similar examples are easily obtained for the free particle \eqref{cp}, by using \eqref{zz1} and \eqref{4bis}, which we proposed as motivation of our time-frequency approach.
  \vskip0.3truecm

\section*{Acknowledgment}
We would like to thank Professor K.~Gr{\"o}chenig  for inspiring this work.

\end{document}